\providecommand{\U}[1]{\protect\rule{.1in}{.1in}}
\providecommand{\U}[1]{\protect\rule{.1in}{.1in}}
\newtheorem{theorem}{Theorem}
\theoremstyle{plain}
\newtheorem{corollary}{Corollary}
\newtheorem{definition}{Definition}
\newtheorem{example}{Example}
\newtheorem{lemma}{Lemma}
\newtheorem{proposition}{Proposition}
\newtheorem{remark}{Remark}
\numberwithin{equation}{section}
\begin{document}
\title{Old and New on Strongly Subadditive/Superadditive Functions}
\author{Constantin P. Niculescu}
\address{Department of Mathematics, University of Craiova, Craiova 200585, Romania}
\email{constantin.p.niculescu@gmail.com}
\dedicatory{Dedicated to Professor Lars-Erik Persson, on the occasion of his 80th anniversary}\subjclass[2020]{Primary 26A51, 39B62 46 B20; Secondary 26D15}
\keywords{Completely monotone functions, von Neumann entropy, higher order convexity;
majorization theory.}
\date{January 8, 2025}

\begin{abstract}
In this paper we provide insight into the classes of strongly
subadditive/superadditive functions by highlighting numerous new examples and
new results.

\end{abstract}
\maketitle

\section{Introduction}

While the classes of subadditive and superadditive functions are ubiquitous in
many fields of mathematics, their strong companions seem to be rarely
mentioned in the existing literature. A notable exception is the old theorem
of Lieb and Ruskai \cite{Lieb1973} stating the strong subadditivity of von
Neumann quantum entropy.

The aim of the present paper is to reveal the existence of a wide range of
strongly subadditive/superadditive functions, motivated by potential theory,
probability theory and statistics, combinatorial optimization, risk
management, quantum physics, etc. As a basis we used the updated 2025 version
of the book \cite{NP2025}, published in collaboration with Lars-Erik Persson.

For convenience, we will restrict ourselves to the case of a real-valued
function $\Phi$ defined on a \emph{convex cone} $\mathcal{C}$ of a real linear
space $E.$ This means that $a\mathbf{x}+b\mathbf{y}\in\mathcal{C}$ whenever
$a,b>0$ and $\mathbf{x},\mathbf{y}\in\mathcal{C};$ the origin does not have to
belong to $\mathcal{C}$ but this occurs when $E$ is a Banach space and
$\mathcal{C}$ is supposed to be closed.

Two important cones in the Euclidean space $\mathbb{R}^{N}$ endowed with the
coordinatewise ordering are
\[
\mathbb{R}_{+}^{N}=\left\{  \mathbf{x}=(x_{1},..,x_{N}):x_{k}\geq0\text{ for
}1\leq k\leq N\right\}
\]
and
\[
\mathbb{R}_{++}^{N}=\left\{  \mathbf{x}=(x_{1},..,x_{N}):x_{k}>0\text{ for
}1\leq k\leq N\right\}  .
\]
Notice that $\mathbb{R}^{N}$ itself represents an example of convex cone
(though not associated to an order relation). For more examples of ordered
Banach spaces and convex cones see the Appendix at the end of this paper.

A function\ $\Phi:\mathcal{C}\rightarrow\mathbb{R}$ is \emph{subadditive }if%
\begin{equation}
\Phi(\mathbf{x+y})\leq\Phi(\mathbf{x})+\Phi(\mathbf{y}) \label{eq_sub}%
\end{equation}
for all $\mathbf{x,y\in\,}\mathcal{C}$ and it is called \emph{strongly}
\emph{subadditive} if in addition%
\begin{equation}
\Phi(\mathbf{x+y+z})+\Phi(\mathbf{z})\leq\Phi(\mathbf{x+z})+\Phi(\mathbf{y+z})
\label{eq_str_sub}%
\end{equation}
for all $\mathbf{x,y,z}\in\mathcal{C}.$

In general, the conditions (\ref{eq_sub}) and (\ref{eq_str_sub}) are
altogether independent. There are subadditive functions that are not strongly
subadditive (see the case of the function $1/x$ restricted to $\mathbb{R}%
_{++})$ as well as functions that verify the inequality (\ref{eq_str_sub})
that are not subadditive (since (\ref{eq_str_sub}) is invariant under the
addition of any affine function to $\Phi$, while the property of subadditivity
fails this property).

If $0\in\mathcal{C},$ the inequality (\ref{eq_sub}) implies $\Phi(0)\geq0$ and
in this case the strong subadditivity of the function $\Phi$ is equivalent to
the fact that $\Phi(0)\geq0$ and $\Phi$ verifies the condition
(\ref{eq_str_sub}).

The function $\Phi:\mathcal{C}\rightarrow\mathbb{R}$ is called
\emph{superadditive }(respectively \emph{strongly superadditive})\emph{ }if
$-\Phi$ is subadditive (respectively strongly subadditive). Thus, if
$0\in\mathcal{C},$ the function $\Phi$ is strongly superadditive if
$\Phi(0)\leq0$ and
\begin{equation}
\Phi(\mathbf{x+y+z})+\Phi(\mathbf{z})\geq\Phi(\mathbf{x+z})+\Phi(\mathbf{y+z})
\label{eq_str_super}%
\end{equation}
for all $\mathbf{x,y,z}\in\mathcal{C}$.

The consideration of difference operators offers more insight about the nature
of strongly subadditive/superadditive functions. Recall that the difference
operators $\Delta_{\mathbf{x}}$ $(\mathbf{x}\in\mathcal{C})$ act on the
functions defined on the cone $\mathcal{C}$ via the formula%
\[
\Delta_{\mathbf{x}}\Phi(\mathbf{z})=\Phi(\mathbf{x}+\mathbf{z})-\Phi
(\mathbf{z})\text{\quad for all }\mathbf{z}\in\mathcal{C}.
\]

The operators $\Delta_{\mathbf{x}}$ are linear and commute to each other.
Important for us are the \emph{differences of second order,} which are given
by the formula%
\[
\Delta_{\mathbf{x}}\Delta_{\mathbf{y}}\Phi(\mathbf{z})=\Phi(\mathbf{x}%
+\mathbf{y}+\mathbf{z})-\Phi(\mathbf{x}+\mathbf{z})-\Phi(\mathbf{y}%
+\mathbf{z})+\Phi(\mathbf{z})\text{\quad for all }\mathbf{x},\mathbf{y}%
,\mathbf{z}\in\mathcal{C}.
\]

Thus a function $\Phi:\mathcal{C}\rightarrow\mathbb{R}$ is strongly
subadditive if and only if it is subadditive and%
\begin{equation}
\Delta_{\mathbf{x}}\Delta_{\mathbf{y}}\Phi(\mathbf{z})\leq0\text{\quad for all
}\mathbf{x},\mathbf{y},\mathbf{z}\in\mathcal{C}. \label{eq_concave}%
\end{equation}
Similarly, $\Phi$ is strongly superadditive if and only if it is superadditive
and%
\begin{equation}
\Delta_{\mathbf{x}}\Delta_{\mathbf{y}}\Phi(\mathbf{z})\geq0\text{\quad for all
}\mathbf{x},\mathbf{y},\mathbf{z}\in\mathcal{C}. \label{eq_convex}%
\end{equation}

Notice that the inequalities (\ref{eq_concave}) and (\ref{eq_convex}) relate
the subject of strong sub/super additivity to that of higher-order
monotonicity (more precisely, to the 2-monotone decreasing and 2-monotone
increasing functions). See Ressel \cite{Res2014} and Niculescu \cite{N2019}.

In the framework of one real variable continuous functions, the inequalities
(\ref{eq_concave}) and (\ref{eq_convex}) can be characterized in terms of convexity.

\begin{lemma}
\label{lem_HLP}A continuous function $f:\mathbb{R}_{+}\rightarrow\mathbb{R}$
is convex if and only if it verifies the inequality%
\[
\Delta_{\mathbf{x}}\Delta_{\mathbf{y}}f(z)=f(x+y+z)-f(x+z)-f(y+z)+f(z)\geq
0\text{\quad for all }x,y,z\in\mathbb{R}_{+}.
\]

\end{lemma}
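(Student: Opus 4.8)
The plan is to prove the two implications separately, noting that only the sufficiency direction will consume the continuity hypothesis.

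For the \emph{necessity} (convexity implies the inequality), I would use no continuity at all. The point is that the four arguments $z,\ x+z,\ y+z,\ x+y+z$ are collinear in the single real variable, and the two inner points are convex combinations of the two outer points $z$ and $x+y+z$. This is in fact the two-point Hardy--Littlewood--P\'olya majorization inequality in disguise: the outer pair $(z,\,x+y+z)$ majorizes the inner pair $(x+z,\,y+z)$, since both have sum $x+y+2z$ while the former is the more spread out. Concretely, assuming $x+y>0$ (the case $x=y=0$ being trivial, as both sides then vanish), I would write
\[
x+z=\frac{y}{x+y}\,z+\frac{x}{x+y}\,(x+y+z),\qquad y+z=\frac{x}{x+y}\,z+\frac{y}{x+y}\,(x+y+z).
\]
Applying convexity to each representation and adding the two resulting inequalities, the coefficients of $f(z)$ and of $f(x+y+z)$ each sum to $1$, so the right-hand side collapses exactly to $f(z)+f(x+y+z)$, which is precisely $\Delta_{x}\Delta_{y}f(z)\ge 0$.

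For the \emph{sufficiency} (the inequality implies convexity), continuity becomes essential, and the strategy is to reduce to midpoint convexity. Given any $a\le c$ in $\mathbb{R}_{+}$, I would specialize the hypothesis by taking $z=a$ and $x=y=\tfrac{c-a}{2}\ge 0$; then $x+z=y+z=\tfrac{a+c}{2}$ and $x+y+z=c$, so the inequality reads $f(a)+f(c)\ge 2f\!\left(\tfrac{a+c}{2}\right)$. Hence $f$ is midpoint (Jensen) convex, and I would then invoke the classical theorem that a continuous midpoint-convex function on an interval is convex to conclude.

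The only non-elementary ingredient --- and hence the main obstacle --- is this final upgrade from midpoint convexity to full convexity, which is exactly where the continuity hypothesis is used; without some regularity assumption the implication fails. If a self-contained argument were preferred, one would iterate the midpoint inequality to obtain Jensen's inequality for all dyadic-rational weights, and then pass to arbitrary weights $\lambda\in[0,1]$ using the density of the dyadics together with the continuity of $f$. This is the standard proof of the upgrade theorem and presents no real difficulty.
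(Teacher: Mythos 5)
Your proof is correct and takes essentially the same route as the paper: the forward direction is the two-point case of the Hardy--Littlewood--P\'olya majorization inequality (which you simply make explicit via the convex-combination identities for $x+z$ and $y+z$ in terms of $z$ and $x+y+z$), and the converse is the same specialization $x=y=\tfrac{c-a}{2}$ yielding midpoint convexity, followed by Jensen's upgrade from midpoint convexity plus continuity to convexity.
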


\begin{proof}
If $f$ is convex, then the inequality $\Delta_{\mathbf{x}}\Delta_{\mathbf{y}%
}f(z)\geq0$ is an easy consequence of the Hardy-Littlewood-Pólya inequality of
majorization. See \cite{NP2025}, Theorem 6.1.3.

Conversely, assuming that $\Delta_{\mathbf{x}}\Delta_{\mathbf{y}}f(z)\geq0$
for all $x,y,z\in\mathbb{R}_{+},$ it follows that $f$ is midpoint convex and
thus the convexity of $f$ is a consequence of an well known result due to
Jensen that asserts that convexity is equivalent to the midpoint convexity
combined with the continuity at the interior points. See \cite{NP2025},
Theorem 1.1.8.
\end{proof}

We are thus led to the following characterization of strongly subadditive
functions defined on $\mathbb{R}_{+}:$

\begin{theorem}
\label{thm_str_sub_1var}Suppose that $\Phi:\mathbb{R}_{+}\rightarrow
\mathbb{R}$ is a continuous function. Then $\Phi$ is strongly subadditive if
and only if $\Phi$ is concave and $\Phi(0)\geq0$ $($respectively $\Phi$ is
strongly superadditive if and only if $\Phi$ is convex and $\Phi(0)\leq0).$
\end{theorem}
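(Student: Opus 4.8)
The plan is to combine the second-order difference characterization of strong subadditivity already established in the introduction with Lemma~\ref{lem_HLP}. Recall that a continuous function $\Phi$ is strongly subadditive precisely when it is subadditive and satisfies $\Delta_{x}\Delta_{y}\Phi(z)\leq 0$ for all $x,y,z\in\mathbb{R}_{+}$. Applying Lemma~\ref{lem_HLP} to $-\Phi$ shows that the inequality $\Delta_{x}\Delta_{y}\Phi(z)\leq 0$ holds for all $x,y,z$ if and only if $\Phi$ is concave. Thus the content of the theorem reduces to showing that, for a concave continuous $\Phi$, the remaining subadditivity condition \eqref{eq_sub} is equivalent to the single scalar requirement $\Phi(0)\geq 0$.

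First I would establish the easy direction. Suppose $\Phi$ is strongly subadditive. Then $\Phi$ is subadditive, so taking $\mathbf{x}=\mathbf{y}=0$ in \eqref{eq_sub} gives $\Phi(0)\leq 2\Phi(0)$, hence $\Phi(0)\geq 0$; and by the difference characterization $\Phi$ is concave via Lemma~\ref{lem_HLP}. This direction is immediate and requires no further work.

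The substantive direction is the converse: assuming $\Phi$ is concave and $\Phi(0)\geq 0$, I must recover full subadditivity \eqref{eq_sub}, after which the difference condition \eqref{eq_concave} (equivalent to concavity) upgrades it to strong subadditivity. The key step is the standard fact that a concave function on $\mathbb{R}_{+}$ with $\Phi(0)\geq 0$ is subadditive. I would prove this directly: for $x,y>0$, write $x=\frac{x}{x+y}(x+y)+\frac{y}{x+y}\cdot 0$ and apply concavity to obtain $\Phi(x)\geq\frac{x}{x+y}\Phi(x+y)+\frac{y}{x+y}\Phi(0)$, and symmetrically for $\Phi(y)$; adding these two inequalities yields $\Phi(x)+\Phi(y)\geq\Phi(x+y)+\frac{x+y}{x+y}\Phi(0)\cdot\frac{\cdots}{\cdots}$, which after simplification gives $\Phi(x)+\Phi(y)\geq\Phi(x+y)+\Phi(0)\geq\Phi(x+y)$ using $\Phi(0)\geq 0$. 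One must also dispatch the boundary cases where $x$ or $y$ equals $0$, which are trivial.

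I do not anticipate a genuine obstacle here, since the result is essentially a packaging of Lemma~\ref{lem_HLP} together with the elementary concavity-plus-nonnegativity-at-zero argument; the superadditive statement follows verbatim by replacing $\Phi$ with $-\Phi$ and reversing all inequalities. The only point demanding mild care is keeping the logical bookkeeping straight: concavity supplies \eqref{eq_concave}, the condition $\Phi(0)\geq 0$ together with concavity supplies \eqref{eq_sub}, and the definition of strong subadditivity requires \emph{both}, so neither hypothesis is redundant.
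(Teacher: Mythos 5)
Your proposal is correct and follows essentially the same route as the paper: the theorem is obtained by combining Lemma~\ref{lem_HLP} (applied to $-\Phi$) with the observation that, once $0$ lies in the cone, subadditivity is recovered from $\Phi(0)\geq 0$ together with the second-order difference condition --- your direct convex-combination argument produces exactly the inequality $\Phi(x+y)+\Phi(0)\leq\Phi(x)+\Phi(y)$ that the paper gets by setting $\mathbf{z}=0$ in \eqref{eq_str_sub}. Apart from the garbled intermediate expression in your addition step (the final line $\Phi(x)+\Phi(y)\geq\Phi(x+y)+\Phi(0)$ is nonetheless correct), the argument is complete.
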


As a consequence, we infer that the following functions are strongly
subadditive on $\mathbb{R}_{+}$,
\begin{gather*}
mx+n+px^{\alpha}\text{ (for }\alpha\in\lbrack0,1]\text{, }m\in\mathbb{R}\text{
and }n,p\in\mathbb{R}_{+}),\text{ }1-\left(  1+\alpha x^{2}\right)
^{1/2}~(\text{for }\alpha>0),\\
-\left(  x+\alpha\right)  \log\left(  x+\alpha\right)  \text{ (for }\alpha
\in\lbrack0,1]\text{)},\\
\log(1+x),~~-\log(\cosh x),\quad e-(1+x)^{1/x},~1-e^{-x}~\text{and }%
(1+e^{-x})^{-1},
\end{gather*}
while their opposites are strongly superadditive. Also superadditive on
$\mathbb{R}_{+}$ are the functions%
\[
\frac{x^{2}}{2}\pm\log(1+x),\quad\frac{x^{2}}{2}\pm\sin x,\text{ }\frac{x^{2}%
}{2}\pm\cos x\text{\quad and }x\Gamma(x)-1.
\]

Theorem \ref{thm_str_sub_1var} can be refined by considering the case of
$\alpha$-strongly convex functions. Recall that a real-valued function $\Phi$
defined on a convex subset $C$ of $\mathbb{R}^{N}$ is called $\alpha
$-\emph{strongly convex} (that is, strongly convex with parameter $\alpha>0)$
if $\Phi-$ $(\alpha/2)\left\Vert \cdot\right\Vert ^{2}$ is convex, that is,%
\[
\Phi((1-\lambda)\mathbf{x}+\lambda\mathbf{y})\leq(1-\lambda)\Phi
(\mathbf{x})+\lambda\Phi(\mathbf{y})-\frac{1}{2}\lambda(1-\lambda
)\alpha\left\Vert \mathbf{x}-\mathbf{y}\right\Vert ^{2}%
\]
for all $\mathbf{x},\mathbf{y}$ in $C$ and $\lambda\in(0,1)$. The function
$\Phi$ is called $\alpha$-\emph{strongly concave} if $-\Phi$ is $\alpha
$-strongly convex, equivalently, if it verifies estimates of the form%
\[
\Phi((1-\lambda)\mathbf{x}+\lambda\mathbf{y})\geq(1-\lambda)\Phi
(\mathbf{x})+\lambda\Phi(\mathbf{y})+\frac{1}{2}\lambda(1-\lambda
)\alpha\left\Vert \mathbf{x}-\mathbf{y}\right\Vert ^{2}%
\]
for all $\mathbf{x},\mathbf{y}$ in $C$ and $\lambda\in(0,1)$.

A simple way to generate strongly convex functions on $\mathbb{R}_{+}$ is
provided by the formula%
\[
\Phi(x)=\int_{0}^{x}\left(  \int_{0}^{s}\varphi(t)dt\right)  ds,
\]
where $\varphi:$ $[0,\infty)\rightarrow\mathbb{R}$ is any strictly positive,
bounded and continuous function.

According to Theorem \ref{thm_str_sub_1var}, one obtains the following result:

\begin{proposition}
\label{prop_a-str_sub}If $f:\mathbb{R}_{+}\rightarrow\mathbb{R}$ is an
$\alpha$-strongly convex function such that $f(0)\leq0,$ then%
\[
f(x+y+z)+f(z)\geq f(x+z)+f(y+z)+\alpha xy
\]
for all $x,y,z\mathbf{\geq0}.$
\end{proposition}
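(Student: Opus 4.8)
The plan is to reduce the claim about an $\alpha$-strongly convex function $f$ to the statement of Theorem~\ref{thm_str_sub_1var} applied to the auxiliary function $g(x)=f(x)-\tfrac{\alpha}{2}x^{2}$. By the definition of $\alpha$-strong convexity, $g$ is convex on $\mathbb{R}_{+}$, and from $f(0)\le 0$ we get $g(0)=f(0)\le 0$. Hence $g$ is a continuous convex function with $g(0)\le 0$, so Theorem~\ref{thm_str_sub_1var} tells us that $g$ is strongly superadditive, i.e.
\[
g(x+y+z)+g(z)\ge g(x+z)+g(y+z)\quad\text{for all }x,y,z\ge 0.
\]

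Next I would simply substitute $g=f-\tfrac{\alpha}{2}(\cdot)^{2}$ back into this inequality and track the quadratic terms. The four quadratic contributions are
\[
-\frac{\alpha}{2}\Bigl[(x+y+z)^{2}+z^{2}-(x+z)^{2}-(y+z)^{2}\Bigr],
\]
and expanding the bracket gives exactly $(x+y+z)^{2}+z^{2}-(x+z)^{2}-(y+z)^{2}=2xy$. Therefore the quadratic part of $g(x+y+z)+g(z)-g(x+z)-g(y+z)$ equals $-\alpha xy$, and moving it to the other side of the superadditivity inequality for $g$ yields precisely
\[
f(x+y+z)+f(z)\ge f(x+z)+f(y+z)+\alpha xy,
\]
which is the claim.

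There is essentially no hard step here: the entire argument is the observation that the second-order difference operator $\Delta_{x}\Delta_{y}$ annihilates linear terms and sends the quadratic $\tfrac{\alpha}{2}t^{2}$ to the constant $\alpha xy$, together with the already-established equivalence between convexity (with the sign condition at $0$) and strong superadditivity. The only point requiring a moment's care is verifying that the hypothesis $f(0)\le 0$ transfers correctly to $g$, which it does immediately since $g$ and $f$ agree at the origin; one should also note that $g$ inherits continuity from $f$, so that Theorem~\ref{thm_str_sub_1var} genuinely applies.
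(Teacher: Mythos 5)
Your argument is correct and is exactly the route the paper intends: the proposition is stated as a consequence of Theorem~\ref{thm_str_sub_1var} applied to $g=f-\tfrac{\alpha}{2}(\cdot)^{2}$, and your computation of the second-order difference of the quadratic term, $(x+y+z)^{2}+z^{2}-(x+z)^{2}-(y+z)^{2}=2xy$, supplies precisely the missing detail. No issues.
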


The analogue of Proposition \ref{prop_a-str_sub} in the case of $\alpha
$-strongly concave functions can be deduced by replacing $f$ by $-f.$

\begin{remark}
\label{rem1}In the case of concrete functions, the property of strong
subadditivity may lead to unexpected results. For example, the property of
strong subadditivity of the functions $\frac{x^{2}}{2}\pm\log(1+x)$ leads to
the following double inequality combining polynomials and exponentials:%
\[
e^{xy}\geq\frac{\left(  1+z\right)  \left(  1+x+y+z\right)  }{\left(
1+x+z\right)  (1+y+z)}\geq e^{-xy}\text{\quad for all }x,y,z\geq0.
\]
\newline In the same vein, if $\Phi:\mathbb{R}_{+}\rightarrow\mathbb{R}$ is a
function of class $C^{1}$ such that%
\[
\left\vert \Phi^{\prime}(x)-\Phi^{\prime}(y)\right\vert \leq L\left\vert
x-y\right\vert
\]
for some positive constant $L>0,$ then%
\[
\left\vert \Phi(x+y+z)-\Phi(x+z)-\Phi(y+z)+\Phi(z)\right\vert \leq Lxy
\]
for all $x,y,z\geq0.$
\end{remark}

It is natural to wonder what is happening if $\mathbb{R}_{+}$ is replaced by
the positive cone $\mathbb{R}_{+}^{N}$ (of the ordered linear space
$\mathbb{R}^{N}$ endowed with the coordinatewise ordering).

Clearly, the sums of functions that depend on single variables such as the
\emph{Shannon entropy},%
\[
\Phi(\mathbf{x})=-\sum\nolimits_{k=1}^{n}x_{k}\log x_{k},\text{\quad
}\mathbf{x}\in\mathbb{R}_{+}^{n},
\]
have the same behavior as in the case $N=1$ (so the \emph{Shannon entropy} is
strongly subadditive).

Another example of the same nature is provided by the strongly superadditive
function
\[
\left\Vert \mathbf{x}\right\Vert ^{2}=x_{1}^{2}+\cdots+x_{N}^{2}%
\]
when restricted to $\mathbb{R}_{+}^{N}$. Alternatively, notice that this
function vanishes at the origin and
\[
\Delta_{\mathbf{x}}\Delta_{\mathbf{y}}\left(  \left\Vert \mathbf{z}\right\Vert
^{2}\right)  =2\langle\mathbf{x},\mathbf{y}\rangle\geq0\text{\quad for all
}\mathbf{x},\mathbf{y}\in\mathbb{R}_{+}^{N}.
\]

A simple computation shows that the scalar product $S(\mathbf{x}%
,\mathbf{y})=\langle\mathbf{x},\mathbf{y}\rangle$ is strongly superadditive on
the product cone $\mathbb{R}_{+}^{N}\times\mathbb{R}_{+}^{N}.$

Also straightforward is the case of functions of the form%
\[
\Phi(\mathbf{x})=f\left(  \langle\mathbf{x},\mathbf{a}\rangle\right)
,\text{\quad}\mathbf{x}\in\mathbb{R}_{+}^{N},
\]
associated to vectors $\mathbf{a}\in\mathbb{R}_{+}^{n}$ and continuous
functions $f:\mathbb{R}_{+}\rightarrow\mathbb{R}.$ According to the
majorization theorem of Hardy-Littlewood-Pólya, $\Phi$ is strongly subadditive
if $f$ is concave and $f(0)\geq0$ (respectively strongly superadditive if $f$
is convex and $f(0)\leq0$). This example can be easily extended by replacing
$\langle\mathbf{x},\mathbf{a}\rangle$ by any positive functional defined on an
ordered Banach space. See the Appendix for a quick review of the necessary
background on ordered Banach spaces.

The case of the function
\[
\Phi(\mathbf{x})=\Phi(x_{1},x_{2})=\sqrt{x_{1}x_{2}},\quad\text{for }%
x_{1},x_{2}\geq0,
\]
shows that Theorem \ref{thm_str_sub_1var} is not necessarily true in higher
dimensions. Indeed, $\Phi$ is concave, $\Phi(0,0)=0,$ but the condition
(\ref{eq_concave}) fails for $\mathbf{x}=(1/3,1/3),$ $\mathbf{y}=(1/3,2/3)$
and $\mathbf{z}=(0,0)$ since
\[
\Delta_{(1/3,1/3)}\Delta_{(1/3,2/3)}\Phi(0,0)=\sqrt{2/3}-\sqrt{1/9}-\sqrt
{2/9}+0\approx\allowbreak0.011\,758\,726\,\allowbreak8>0.
\]

An explanation for this failure is offered by the fact that the differential
of a differentiable concave function $\Phi$ of several variables is not
necessarily monotone decreasing. For example, in the case of the function
$\Phi(\mathbf{x})=\sqrt{x_{1}x_{2}},$ its differential $\Phi^{\prime}$ is
given by the formula%
\[
\Phi^{\prime}(\mathbf{x})=\left(  \frac{x_{2}}{2\sqrt{x_{1}x_{2}}},\frac
{x_{1}}{2\sqrt{x_{1}x_{2}}}\right)  ,\text{\quad}x_{1},x_{2}>0
\]
and $\Phi^{\prime}$ is neither nondecreasing nor nonincreasing as a function
from $\mathbb{R}_{++}^{2}$ into $L(\mathbb{R}^{2},\mathbb{R})$. Notice that
$S\leq T$ in $L(\mathbb{R}^{2},\mathbb{R})$ is understood as $S(\mathbf{z}%
)\leq T(\mathbf{z})$ for all $\mathbf{z\in}\mathbb{R}_{+}^{2}.$

As is well known (see for example \cite{NP2025}, Theorem 1.4.3), if $f$ is a
function continuous on the interval $I$ and differentiable on the interior of
$I,$ then $f$ is concave (respectively convex) iff its derivative is
nonincreasing (respectively nondecreasing). As a consequence, one obtains the
following variant of Theorem \ref{thm_str_sub_1var} under the presence of differentiability:

\begin{theorem}
\label{thm_str_sub_1variant_diff}Suppose that $\Phi$ is a function continuous
on $\mathbb{R}_{+}$ and differentiable on $\mathbb{R}_{++}$. Then $\Phi$ is
strongly subadditive if and only if $\Phi(0)\geq0$ and $\Phi^{\prime}$ is
nonincreasing $($respectively $\Phi$ is strongly superadditive if and only if
$\Phi(0)\leq0$ and $\Phi^{\prime}$ is nondecreasing$).$
\end{theorem}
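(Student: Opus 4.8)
The plan is to reduce Theorem \ref{thm_str_sub_1variant_diff} to Theorem \ref{thm_str_sub_1var} by invoking the characterization of concavity/convexity via monotonicity of the derivative. Since Theorem \ref{thm_str_sub_1var} already tells us that a continuous $\Phi:\mathbb{R}_{+}\rightarrow\mathbb{R}$ is strongly subadditive if and only if it is concave and $\Phi(0)\geq0$, the only work remaining is to translate the concavity hypothesis into the derivative condition under the extra differentiability assumption. This is precisely the content of the classical result cited earlier in the excerpt (\cite{NP2025}, Theorem 1.4.3): for a function continuous on an interval $I$ and differentiable on its interior, concavity is equivalent to $\Phi'$ being nonincreasing, and convexity is equivalent to $\Phi'$ being nondecreasing.

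First I would record the hypothesis: $\Phi$ is continuous on $\mathbb{R}_{+}$ and differentiable on $\mathbb{R}_{++}$, so the cited equivalence applies with $I=\mathbb{R}_{+}$ and interior $\mathbb{R}_{++}$. Then for the subadditive direction I would argue as follows. By Theorem \ref{thm_str_sub_1var}, $\Phi$ is strongly subadditive if and only if $\Phi(0)\geq0$ and $\Phi$ is concave. By the differentiability characterization of concavity, $\Phi$ concave is equivalent to $\Phi'$ nonincreasing on $\mathbb{R}_{++}$. Chaining these two equivalences yields that $\Phi$ is strongly subadditive if and only if $\Phi(0)\geq0$ and $\Phi'$ is nonincreasing, which is exactly the claimed statement.

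For the superadditive half, I would apply the same chain of equivalences to $-\Phi$, or equivalently invoke Theorem \ref{thm_str_sub_1var} in its superadditive form together with the convexity-versus-nondecreasing-derivative characterization. Concretely, $\Phi$ strongly superadditive is equivalent to $\Phi(0)\leq0$ and $\Phi$ convex, and convexity under the stated differentiability is equivalent to $\Phi'$ nondecreasing; combining gives the desired criterion. A one-line remark suffices to note that the sign conventions match: $-\Phi$ concave with $(-\Phi)(0)\geq0$ corresponds to $\Phi$ convex with $\Phi(0)\leq0$, and $(-\Phi)'=-\Phi'$ nonincreasing corresponds to $\Phi'$ nondecreasing.

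There is no serious obstacle here, since the theorem is a formal corollary obtained by splicing together two previously established equivalences. If any point deserves care, it is only the boundary behavior at $0$: the value $\Phi(0)$ enters through the sign condition $\Phi(0)\geq0$ (resp. $\Phi(0)\leq0$), while differentiability is only assumed on the open half-line $\mathbb{R}_{++}$, so the monotonicity of $\Phi'$ is asserted only there. I would make sure the statement of the cited concavity characterization indeed allows continuity up to the endpoint together with differentiability only in the interior, which is exactly the hypothesis under which \cite{NP2025}, Theorem 1.4.3 is stated, so the two results dovetail without any gap.
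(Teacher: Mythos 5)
Your proposal is correct and follows exactly the route the paper takes: the theorem is presented there as an immediate consequence of Theorem \ref{thm_str_sub_1var} combined with the cited characterization of concavity/convexity via the monotonicity of the derivative for functions continuous on an interval and differentiable on its interior. Your extra remark about the boundary point $0$ and the sign convention for $-\Phi$ only makes explicit what the paper leaves implicit.
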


This result will be extended in Section 2 to the framework of differentiable
functions of several variables. See Theorem \ref{thm_str_sub_Rn}.

As was already noticed, Theorem \ref{thm_str_sub_1var} does not extend to the
case of several variables. However, it admits an analog for twice
differentiable functions, based on the sign of the coefficients of the Hessian
matrix. See Theorem \ref{thm_str_sub_several var_2diff}.

Section 3 is devoted to a discussion concerning the connection between the
class of strongly subadditive functions and that of submodular functions
defined on the positive cone of a Banach lattice. It is shown (see Corollary
\ref{cor_str_submod}) that every strongly subadditive (respectively strongly
superadditive)\emph{ }function that belongs either to $C(\mathbf{R}_{+}%
^{N})\cap C^{2}(\mathbb{R}_{++}^{N})$ or to $C^{2}(\mathbb{R}^{N})$ is
submodular (respectively supermodular). The converse does not work in general,
an example being the log-sum-exp function from the theory of convex
optimization. See \cite{BV} and \cite{NP2025}. Surprisingly, if two of the
three variables appearing in the inequality of strong superadditivity are
comonotonic, then this inequality works for the log-sum-exp function. See
Remark \ref{rem_com_str_superad}.

The completely monotone functions verify the condition (\ref{eq_str_super}) of
2-monotone increasing, but not necessarily the property of superadditivity.
Using elementary geometric transformations, we will show in Section 4 how to
associate them strongly superadditive functions.

In Section 5, based on the weak majorization theorem of Tomi\'{c} and Weyl, we
show how to generate functional inequalities of Popoviciu type based on
strongly superadditive functions.

For the reader's convenience, the paper ends with an Appendix containing the
necessary background on ordered Banach spaces.

\section{The strong subadditivity/superadditivity in the context of several
variables}

In what follows $E$ and $F$ are two ordered Banach spaces.

Given a differentiable function $\Phi:E_{++}\rightarrow F,$ the sign of the
second order differences%
\[
\Delta_{\mathbf{x}}\Delta_{\mathbf{y}}\mathbf{\Phi}(\mathbf{z})=\left(
\Phi(\mathbf{x}+\mathbf{y}+\mathbf{z})-\Phi(\mathbf{x}+\mathbf{z})\right)
-\left(  \Phi(\mathbf{y}+\mathbf{z})-\Phi(\mathbf{z})\right)  ,
\]
can be settled via the monotonicity of the function $\Delta_{\mathbf{u}}\Phi$
$:\mathbf{v}\rightarrow\Phi(\mathbf{u}+\mathbf{v})-\Phi(\mathbf{u})$, which in
turn can be derived from the sign of the derivative of $\Delta_{\mathbf{u}%
}\Phi$ along every direction generated by a positive vector $\mathbf{w}\in
E_{++}$. See Lemma \ref{lemAmann_conv} in the Appendix. Since this derivative
is done by the formula%
\begin{align*}
\frac{\partial}{\partial\mathbf{w}}\Delta_{\mathbf{u}}\Phi(\mathbf{v})  &
=\lim_{\varepsilon\rightarrow0+}\frac{\Phi(\mathbf{u}+\mathbf{v+}%
\varepsilon\mathbf{w})-\Phi(\mathbf{u+}\varepsilon\mathbf{w})-\Phi
(\mathbf{u}+\mathbf{v})+\Phi(\mathbf{u})}{\varepsilon}\\
&  =\lim_{\varepsilon\rightarrow0+}\frac{\Phi(\mathbf{u}+\mathbf{v+}%
\varepsilon\mathbf{w})-\Phi(\mathbf{u}+\mathbf{v})}{\varepsilon}%
-\lim_{\varepsilon\rightarrow0+}\frac{\Phi(\mathbf{u+}\varepsilon
\mathbf{w})-\Phi(\mathbf{u})}{\varepsilon}\\
&  =d\Phi(\mathbf{u}+\mathbf{v})(\mathbf{w})-d\Phi(\mathbf{u})(\mathbf{w}),
\end{align*}
we are led to the following result:

\begin{theorem}
\label{thm_str_sub_Rn}Suppose that $\Phi:E_{++}\rightarrow F$ is a continuous
function, differentiable on $E_{++}.$ Then:

$(i)$ $\Phi$ is strongly subadditive if and only if $\Phi(\mathbf{0})\geq0$
and the differential of $\Phi$ is nonincreasing as a map from $E_{+}$ into
$L(E,F);$

$(ii)$ $\Phi$ is strongly superadditive if and only if $\Phi(\mathbf{0})\leq0$
and the differential of $\Phi$ is nondecreasing as a map from $E_{+}$ into
$L(E,F).$
\end{theorem}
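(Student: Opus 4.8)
The plan is to prove part $(i)$ and to obtain $(ii)$ from it by applying $(i)$ to $-\Phi$: indeed $\Phi$ is strongly superadditive exactly when $-\Phi$ is strongly subadditive, while $d(-\Phi)=-d\Phi$ is nonincreasing precisely when $d\Phi$ is nondecreasing, and the requirement $(-\Phi)(\mathbf{0})\geq0$ is the same as $\Phi(\mathbf{0})\leq0$. For $(i)$, I would first fall back on the equivalence already recorded in the Introduction: since $\mathbf{0}\in E_{+}$ and $\Phi$ is continuous on $E_{+}$, the function $\Phi$ is strongly subadditive if and only if $\Phi(\mathbf{0})\geq0$ together with the second-order condition (\ref{eq_concave}), that is $\Delta_{\mathbf{x}}\Delta_{\mathbf{y}}\Phi(\mathbf{z})\leq0$ for all $\mathbf{x},\mathbf{y},\mathbf{z}\in E_{+}$. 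The whole theorem therefore reduces to showing that, for a $\Phi$ continuous on $E_{+}$ and differentiable on $E_{++}$, condition (\ref{eq_concave}) is equivalent to $d\Phi$ being nonincreasing from $E_{+}$ into $L(E,F)$.

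The next step is to read the second-order difference as an increment of a first-order difference. Because the operators $\Delta_{\mathbf{x}}$ commute, one has
\[
\Delta_{\mathbf{x}}\Delta_{\mathbf{y}}\Phi(\mathbf{z})=\Delta_{\mathbf{x}}\Phi(\mathbf{y}+\mathbf{z})-\Delta_{\mathbf{x}}\Phi(\mathbf{z}),
\]
so that, for a fixed $\mathbf{x}\in E_{+}$, the inequality $\Delta_{\mathbf{x}}\Delta_{\mathbf{y}}\Phi(\mathbf{z})\leq0$ for all $\mathbf{y},\mathbf{z}\in E_{+}$ states exactly that the first-order difference $\mathbf{z}\mapsto\Delta_{\mathbf{x}}\Phi(\mathbf{z})=\Phi(\mathbf{x}+\mathbf{z})-\Phi(\mathbf{z})$ is nonincreasing on $E_{+}$. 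Hence condition (\ref{eq_concave}) is equivalent to the assertion that $\Delta_{\mathbf{x}}\Phi$ is nonincreasing for every $\mathbf{x}\in E_{+}$.

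At this point I would invoke the monotonicity criterion of Lemma \ref{lemAmann_conv}, which characterizes a differentiable map between ordered Banach spaces as nonincreasing by the nonpositivity of its directional derivatives along all positive directions $\mathbf{w}\in E_{++}$. Applying it to $\Delta_{\mathbf{x}}\Phi$, and using the directional derivative computed just above the theorem (with base point $\mathbf{z}$ and shift $\mathbf{x}$), namely that the directional derivative at $\mathbf{z}$ along $\mathbf{w}$ of the map $\mathbf{z}\mapsto\Delta_{\mathbf{x}}\Phi(\mathbf{z})$ equals $d\Phi(\mathbf{x}+\mathbf{z})(\mathbf{w})-d\Phi(\mathbf{z})(\mathbf{w})$, I conclude that $\Delta_{\mathbf{x}}\Phi$ is nonincreasing if and only if $d\Phi(\mathbf{x}+\mathbf{z})(\mathbf{w})\leq d\Phi(\mathbf{z})(\mathbf{w})$ for all $\mathbf{z}\in E_{++}$ and all $\mathbf{w}\in E_{++}$, i.e. $d\Phi(\mathbf{x}+\mathbf{z})\leq d\Phi(\mathbf{z})$ in $L(E,F)$ for the order defined by action on positive vectors. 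Letting $\mathbf{x}$ run over $E_{+}$ and writing $\mathbf{z}'=\mathbf{x}+\mathbf{z}\geq\mathbf{z}$, this is precisely the statement that $d\Phi$ is nonincreasing from $E_{+}$ into $L(E,F)$. Combined with the reduction of the first paragraph, this establishes $(i)$, and then $(ii)$.

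The main obstacle is the rigorous use of Lemma \ref{lemAmann_conv}: I must make sure that testing directional derivatives only along positive directions $\mathbf{w}$ genuinely captures monotonicity, and that the operator inequalities it produces are exactly those defining the order on $L(E,F)$. This is why the convention that $S\leq T$ means $S(\mathbf{w})\leq T(\mathbf{w})$ for all $\mathbf{w}\in E_{+}$ is essential, since it dovetails perfectly with the directional-derivative formula. A secondary technical point is the boundary: $\Phi$ is differentiable only on the open cone $E_{++}$, so the argument controls the second-order differences for interior base points, and one must pass to boundary points (where $\mathbf{0}$ lives) by the continuity of $\Phi$ on $E_{+}$, exactly as in the one-variable Theorem \ref{thm_str_sub_1variant_diff}. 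Once these two points are secured, the equivalence follows with no further computation.
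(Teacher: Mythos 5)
Your proposal is correct and follows essentially the same route as the paper: the paper's own argument is precisely the computation preceding the theorem, which reads $\Delta_{\mathbf{x}}\Delta_{\mathbf{y}}\Phi(\mathbf{z})$ as an increment of the first-order difference $\Delta_{\mathbf{x}}\Phi$, applies Lemma \ref{lemAmann_conv} to test its monotonicity along positive directions, and identifies the resulting directional derivative with $d\Phi(\mathbf{x}+\mathbf{z})(\mathbf{w})-d\Phi(\mathbf{z})(\mathbf{w})$. Your additional remarks on reducing $(ii)$ to $(i)$ via $-\Phi$, on the equivalence with condition (\ref{eq_concave}) from the Introduction, and on passing to boundary points by continuity merely make explicit what the paper leaves implicit.
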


\begin{corollary}
\label{ex4}For every $p\in(1,\infty)$, the function%
\[
\Phi:L^{p}\left(  \mathbb{R}\right)  \rightarrow\mathbb{R},\text{\quad}%
\Phi(f)=\left\Vert f\right\Vert _{p}^{p}=\int_{\mathbb{R}}\left\vert
f\right\vert ^{p}\mathrm{d}t,
\]
is strongly superadditive on the positive cone of the Banach lattice
$L^{p}\left(  \mathbb{R}\right)  $ $($of all $p$th power Lebesgue integrable
functions on $\mathbb{R}).$
\end{corollary}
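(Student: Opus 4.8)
The plan is to apply Theorem~\ref{thm_str_sub_Rn}$(ii)$ with $E=L^{p}\left(\mathbb{R}\right)$ and $F=\mathbb{R}$. By that result it suffices to verify two things: that $\Phi$ is nonpositive at the origin, and that its differential is a nondecreasing map from the positive cone of $L^{p}\left(\mathbb{R}\right)$ into $L(L^{p}\left(\mathbb{R}\right),\mathbb{R})$. The first point is immediate, since $\Phi(0)=\int_{\mathbb{R}}0\,\mathrm{d}t=0\leq0$.

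For the second point I would first compute the differential. Because $p>1$, the functional $\Phi(f)=\int_{\mathbb{R}}\left\vert f\right\vert ^{p}\mathrm{d}t$ is Fr\'echet differentiable, and restricting attention to the positive cone, where $f\geq0$ almost everywhere, the derivative takes the form
\[
d\Phi(f)(h)=p\int_{\mathbb{R}}f^{p-1}h\,\mathrm{d}t.
\]
I would justify this formula by the standard argument (applying dominated convergence to the difference quotients, or estimating the remainder directly), noting that $f\in L^{p}$ forces $f^{p-1}\in L^{p'}$ for the conjugate exponent $p'=p/(p-1)$, so that by H\"older's inequality $d\Phi(f)$ is indeed a bounded linear functional and the integral defining it converges.

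Next I would check the monotonicity. Recall that the order on $L(E,F)$ is given by $S\leq T$ if and only if $S(w)\leq T(w)$ for every $w$ in the positive cone $E_{+}$. Hence I must show that whenever $0\leq f_{1}\leq f_{2}$ in the lattice order of $L^{p}\left(\mathbb{R}\right)$ (that is, $f_{1}\leq f_{2}$ almost everywhere), one has $d\Phi(f_{1})(h)\leq d\Phi(f_{2})(h)$ for every $h\geq0$; equivalently,
\[
\int_{\mathbb{R}}\left(  f_{2}^{p-1}-f_{1}^{p-1}\right)  h\,\mathrm{d}t\geq0.
\]
Since $p-1>0$, the scalar function $t\mapsto t^{p-1}$ is nondecreasing on $[0,\infty)$, so $f_{1}\leq f_{2}$ almost everywhere yields $f_{1}^{p-1}\leq f_{2}^{p-1}$ almost everywhere. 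The integrand is then the product of a nonnegative function with $h\geq0$, and the inequality follows. This establishes that the differential is nondecreasing, and Theorem~\ref{thm_str_sub_Rn}$(ii)$ gives the strong superadditivity of $\Phi$.

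The mathematical heart of the argument is thus nothing more than the monotonicity of $t\mapsto t^{p-1}$, consistent with the convexity of $t\mapsto t^{p}$. Accordingly, the only genuine care required is technical: establishing the Fr\'echet differentiability of $\Phi$ on the cone (in particular the integrability furnished by H\"older with the conjugate exponent, which is where $p>1$ is used), and correctly matching the ordered-space framework of Theorem~\ref{thm_str_sub_Rn}, especially the reading of the order on $L(E,\mathbb{R})$ through the positive cone. A minor subtlety, to be addressed by reference to the conventions of the Appendix, is that the positive cone of $L^{p}\left(\mathbb{R}\right)$ has empty topological interior, so $E_{++}$ is not to be read as an interior in the norm topology; nonetheless the directional-derivative computation preceding Theorem~\ref{thm_str_sub_Rn} applies, since one differentiates only along positive directions $w\in E_{+}$.
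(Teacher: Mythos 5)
Your proposal is correct and follows essentially the same route as the paper: both verify $\Phi(0)=0$, compute the differential $d\Phi(f)(h)=p\int_{\mathbb{R}}h\,|f|^{p-1}\operatorname{sgn}f\,\mathrm{d}t$ (the paper cites \cite{NP2025}, Proposition 5.2.9, where you sketch the justification), deduce its monotonicity on the positive cone from the monotonicity of $t\mapsto t^{p-1}$, and invoke Theorem~\ref{thm_str_sub_Rn}$(ii)$. Your closing remark about the empty interior of the positive cone of $L^{p}(\mathbb{R})$ flags a point the paper passes over in silence, and your resolution (differentiating only along positive directions) is the right one.
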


\begin{proof}
Indeed, the function $\Phi$ is continuously differentiable and its
differential is defined by the formula%
\[
d\Phi(f)(h)=p\int_{\mathbb{R}}h\left\vert f\right\vert ^{p-1}%
\operatorname*{sgn}f\mathrm{d}t\text{\quad for all }f,h\in L^{p}\left(
\mathbb{R}\right)  .
\]
See \cite{NP2025}, Proposition $5.2.9$. Clearly,
\[
0\leq f\leq g\text{ in }L^{p}\left(  \mathbb{R}\right)  \text{ implies }%
d\Phi(f)\leq d\Phi(g)
\]
and due to the fact that $\Phi(0)=0$ the conclusion of the corollary follows
from Theorem \ref{thm_str_sub_Rn} $(ii).$
\end{proof}

\begin{corollary}
\emph{(}F. Zhang \cite{Zhang}, Section $7.2$, Exercise $36$\emph{)} The
function $\det$ is strongly superadditive on $\operatorname*{Sym}%
\nolimits^{+}(N,\mathbb{R}).$
\end{corollary}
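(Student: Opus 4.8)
The plan is to apply Theorem~\ref{thm_str_sub_Rn}(ii) on the cone $\mathcal{C}=\operatorname*{Sym}\nolimits^{+}(N,\mathbb{R})$ of positive definite symmetric matrices, which is an open convex cone on which $\det$ is smooth. Two things must be checked: that $\det$ vanishes appropriately at the boundary point $0$ (here $\det(0)=0$, so the condition $\Phi(\mathbf{0})\le0$ holds), and — the substantive point — that the differential $d(\det)$ is nondecreasing as a map from the positive cone into the dual. First I would record the classical formula for the differential of the determinant, namely $d(\det)(A)(H)=\det(A)\,\operatorname{tr}(A^{-1}H)=\operatorname{tr}(\operatorname{cof}(A)^{\!\top}H)=\operatorname{tr}(\operatorname{adj}(A)\,H)$, where $\operatorname{adj}(A)$ is the adjugate (so that for $A$ symmetric positive definite $d(\det)(A)(H)=\operatorname{tr}(\operatorname{adj}(A)H)$ and $\operatorname{adj}(A)=\det(A)\,A^{-1}$ is itself symmetric positive definite).

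With this in hand, the monotonicity statement to verify is: if $0\preceq A\preceq B$ in the Loewner order, then $d(\det)(A)\le d(\det)(B)$ as functionals on the positive cone, i.e.
\[
\operatorname{tr}\bigl(\operatorname{adj}(A)\,H\bigr)\le\operatorname{tr}\bigl(\operatorname{adj}(B)\,H\bigr)\quad\text{for every }H\succeq0.
\]
This holds if and only if $\operatorname{adj}(B)-\operatorname{adj}(A)\succeq0$, since $\operatorname{tr}(SH)\ge0$ for all $H\succeq0$ exactly when $S\succeq0$. Thus the whole corollary reduces to the single clean assertion that the adjugate map $A\mapsto\operatorname{adj}(A)=\det(A)\,A^{-1}$ is monotone increasing on $\operatorname*{Sym}\nolimits^{+}(N,\mathbb{R})$ with respect to the Loewner order.

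The main obstacle is precisely this last monotonicity claim, because it is a genuine operator-monotonicity phenomenon and is \emph{not} a consequence of the operator-monotonicity of any single scalar function: the individual factors behave oppositely, since $A\mapsto A^{-1}$ is operator \emph{decreasing} while $A\mapsto\det(A)$ is increasing, so the product $\det(A)\,A^{-1}$ requires a real argument. I would establish it by a diagonalization/perturbation argument: writing $B=A+C$ with $C\succeq0$ and differentiating along the segment $A+tC$, it suffices to show $\frac{d}{dt}\operatorname{adj}(A+tC)\succeq0$, which after using $\frac{d}{dt}\det=\det\cdot\operatorname{tr}(A^{-1}C)$ and $\frac{d}{dt}A^{-1}=-A^{-1}CA^{-1}$ reduces to verifying that for positive definite $A$ and $C\succeq0$ the matrix $\operatorname{tr}(A^{-1}C)\,A^{-1}-A^{-1}CA^{-1}$ is positive semidefinite; conjugating by $A^{1/2}$ this is the statement that $\operatorname{tr}(M)\,I-M\succeq0$ for the positive semidefinite matrix $M=A^{-1/2}CA^{-1/2}$, which is immediate since the eigenvalues of $M$ are dominated by their sum $\operatorname{tr}(M)$. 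Integrating in $t$ then yields $\operatorname{adj}(B)\succeq\operatorname{adj}(A)$, and the conclusion follows from Theorem~\ref{thm_str_sub_Rn}(ii).
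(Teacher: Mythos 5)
Your proposal is correct, and it follows the paper's overall strategy: both arguments invoke Theorem~\ref{thm_str_sub_Rn}~$(ii)$ together with the formula $d(\det)(A)(H)=\det(A)\operatorname{trace}(A^{-1}H)$, so everything hinges on showing that $0\preceq A\preceq B$ forces $d(\det)(A)(H)\le d(\det)(B)(H)$ for all $H\succeq 0$. Where you genuinely diverge is in how that monotonicity is established. The paper fixes $H$ (regularized to $H+\varepsilon I$ so that it is invertible), performs the congruence $U=H^{-1/2}AH^{-1/2}$, $V=H^{-1/2}BH^{-1/2}$ to reduce the claim to the scalar inequality $\det U\operatorname{trace}U^{-1}\le\det V\operatorname{trace}V^{-1}$ for $U\preceq V$, and settles this via Weyl's monotonicity principle for eigenvalues together with the monotonicity of the elementary symmetric function $e_{N-1}$ in each variable. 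You instead prove the equivalent but stronger-looking operator statement that the adjugate map $A\mapsto\det(A)\,A^{-1}$ is Loewner-monotone on $\operatorname*{Sym}\nolimits^{++}(N,\mathbb{R})$, by differentiating along the segment $A+tC$ and reducing, after conjugation by $(A+tC)^{1/2}$, to the elementary fact that $\operatorname{tr}(M)I\succeq M$ for $M\succeq 0$. Your route avoids both the $\varepsilon$-regularization of $H$ and Weyl's principle, and it isolates a clean reusable lemma (monotonicity of the adjugate); the paper's route stays at the level of a scalar eigenvalue inequality. The only caveat in your write-up, which the paper shares, is that the differentiation argument lives on the open cone $\operatorname*{Sym}\nolimits^{++}(N,\mathbb{R})$, so the statement on the closed cone is obtained by continuity of $\det$ and of the adjugate.
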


See \cite{LS2014} for a generalization in the context of generalized matrix
functions and block matrices. The fact that $\det$ has positive differences of
any order on $\operatorname*{Sym}\nolimits^{+}(N,\mathbb{R})$ can be found in
\cite{NS2023}.

\begin{proof}
One applies Theorem \ref{thm_str_sub_Rn} $(ii)$. Indeed, the function
$X\rightarrow\det(X)$ is differentiable on $\operatorname*{Sym}\nolimits^{++}%
(N,\mathbb{R}))$ and its differential is given by the formula
\[
d\left(  \det(X)\right)  (C)=\frac{\partial\det(X)}{\partial C}=\det
(X)\operatorname*{trace}(CX^{-1}).
\]
for all $C\in\operatorname*{Sym}(N,\mathbb{R});$ this is a consequence of the
fact that
\[
\frac{\det\left(  X+tC\right)  -\det X}{t}=\det(X)\frac{\det(I+tCX^{-1})-1}%
{t}\rightarrow\det X\operatorname*{trace}CX^{-1}\text{ as }t\rightarrow0+.
\]

Next we will prove that $0\leq X\leq Y$ in $\operatorname*{Sym}\nolimits^{++}%
(N,\mathbb{R}))$ implies%
\[
d\left(  \det(X)\right)  (C)\leq d\left(  \det(Y)\right)  (C)
\]
for every $C\in\operatorname*{Sym}\nolimits^{+}(N,\mathbb{R}),$ equivalently,
that
\[
\det(X)\operatorname*{trace}(CX^{-1})\leq\det(Y)\operatorname*{trace}%
(CY^{-1})
\]
for all $C\in\operatorname*{Sym}\nolimits^{++}(N,\mathbb{R}).$ Use the trick
of replacing $C$ by $C+\varepsilon I$ and pass to the limit as $\varepsilon
\rightarrow0+.$

Put in a more convenient form, the last inequality reads as follows:%
\[
\det(C^{-1/2}XC^{-1/2})\operatorname*{trace}(C^{1/2}X^{-1}C^{1/2})\leq
\det(C^{-1/2}YC^{-1/2})\operatorname*{trace}(C^{1/2}Y^{-1}C^{1/2})
\]
for all $C\in\operatorname*{Sym}\nolimits^{++}(N,\mathbb{R}).$ This allows us
to reduce the proof of the strong superadditivity of the function $\det$ to
the following fact: if $U,V\in\operatorname*{Sym}\nolimits^{++}(N,\mathbb{R})$
and $U\leq V,$ then%
\begin{equation}
\det U\operatorname*{trace}U^{-1}\leq\det V\operatorname*{trace}V^{-1}.
\label{det super}%
\end{equation}
To prove this let $\lambda_{1}\geq\cdots\geq\lambda_{N}$ and $\mu_{1}%
\geq\cdots\geq\mu_{N}$ be the sequences of eigenvalues of $U$ and respectively
$V$ (counted with their multiplicities). According to Weyl's monotonicity
principle $($see the Appendix$)$ we have%
\begin{equation}
\lambda_{k}\leq\mu_{k}\quad\text{for }k=1,...,N. \label{ineq_eigenval}%
\end{equation}
In terms of eigenvalues, the inequality (\ref{det super}) reads as%
\[
\lambda_{1}\cdots\lambda_{N}\left(  \sum\nolimits_{k=1}^{N}\frac{1}%
{\lambda_{k}}\right)  \leq\mu_{1}\cdots\mu_{N}\left(  \sum\nolimits_{k=1}%
^{N}\frac{1}{\mu_{k}}\right)  ,
\]
which is a consequence of (\ref{ineq_eigenval}).

For an alternative proof of the strong superadditivity of the function $\det$,
see \cite{NP2025}, Lemma 6.5.1.
\end{proof}

In the same way one can prove the following result.

\begin{corollary}
If $A_{1},...,A_{n}$ are positive definite matrices of order $N,$ then the
function $\Phi:\mathbf{x\rightarrow}\log\det\left(  \sum\nolimits_{i=1}%
^{n}x_{i}A_{i}\right)  $ is strongly subadditive on $\mathbb{R}_{+}^{N}$.
\end{corollary}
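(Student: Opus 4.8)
The plan is to deduce the result from Theorem~\ref{thm_str_sub_Rn}$(i)$, applied with $E=\mathbb{R}^{n}$ and $F=\mathbb{R}$ (so the cone in the statement should read $\mathbb{R}_{+}^{n}$, there being one variable per matrix). Writing $S(\mathbf{x})=\sum_{i=1}^{n}x_{i}A_{i}$, the map $\mathbf{x}\mapsto S(\mathbf{x})$ is linear and sends $\mathbb{R}_{++}^{n}$ into the open cone $\operatorname{Sym}^{++}(N,\mathbb{R})$, since a strictly positive combination of positive definite matrices is again positive definite; hence $\Phi(\mathbf{x})=\log\det S(\mathbf{x})$ is well defined and smooth on $\mathbb{R}_{++}^{n}$. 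As the whole matter reduces to the sign of the second-order differences, I would work on the open cone $\mathbb{R}_{++}^{n}$ and check the only nontrivial hypothesis of the theorem, namely that the differential $d\Phi$ is nonincreasing as a map from $\mathbb{R}_{+}^{n}$ into $L(\mathbb{R}^{n},\mathbb{R})$.

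First I would compute the differential. Using the same limiting argument as in the proof for $\det$, together with the identity $\frac{d}{dt}\log\det(M+tC)\big|_{t=0}=\operatorname{trace}(M^{-1}C)$ and the linearity of $S$, one obtains
\[
d\Phi(\mathbf{x})(\mathbf{h})=\operatorname{trace}\!\left(S(\mathbf{x})^{-1}S(\mathbf{h})\right)=\sum_{i=1}^{n}h_{i}\,\operatorname{trace}\!\left(S(\mathbf{x})^{-1}A_{i}\right)
\]
for all $\mathbf{h}\in\mathbb{R}^{n}$. Recalling that the order on $L(\mathbb{R}^{n},\mathbb{R})$ is tested against vectors $\mathbf{h}\in\mathbb{R}_{+}^{n}$, the monotonicity statement $d\Phi(\mathbf{y})\leq d\Phi(\mathbf{x})$ for $\mathbf{x}\leq\mathbf{y}$ reduces, coefficient by coefficient, to showing that each scalar function $\mathbf{x}\mapsto\operatorname{trace}\!\left(S(\mathbf{x})^{-1}A_{i}\right)$ is nonincreasing.

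The heart of the matter is then a chain of three monotonicity facts for the Loewner order. If $\mathbf{x}\leq\mathbf{y}$ in $\mathbb{R}_{+}^{n}$, then $S(\mathbf{y})-S(\mathbf{x})=\sum_{i}(y_{i}-x_{i})A_{i}$ is positive semidefinite, so $0\prec S(\mathbf{x})\preceq S(\mathbf{y})$. Since matrix inversion is operator decreasing on the positive definite cone, this gives $S(\mathbf{y})^{-1}\preceq S(\mathbf{x})^{-1}$; and since the trace of a product of two positive semidefinite matrices is nonnegative, pairing $S(\mathbf{x})^{-1}-S(\mathbf{y})^{-1}\succeq0$ with $A_{i}\succeq0$ yields $\operatorname{trace}\!\left(S(\mathbf{y})^{-1}A_{i}\right)\leq\operatorname{trace}\!\left(S(\mathbf{x})^{-1}A_{i}\right)$. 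Multiplying by $h_{i}\geq0$ and summing shows that $d\Phi$ is nonincreasing, and Theorem~\ref{thm_str_sub_Rn}$(i)$ then delivers the second-order inequality (\ref{eq_str_sub}). I expect the operator anti-monotonicity of inversion to be the only genuinely nonelementary ingredient; it plays here precisely the role that Weyl's monotonicity principle played in the proof for $\det$. One caveat worth flagging is that $\Phi(\mathbf{0})=\log\det(0)=-\infty$, so the origin must be excluded and the substantive content of the corollary is exactly the strong subadditivity inequality (\ref{eq_str_sub}) on the interior of the cone, rather than the auxiliary normalization $\Phi(\mathbf{0})\geq0$.
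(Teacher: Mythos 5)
Your proposal is correct and follows essentially the same route the paper intends (it proves this corollary ``in the same way'' as the strong superadditivity of $\det$): apply Theorem~\ref{thm_str_sub_Rn}, compute $d\Phi(\mathbf{x})(\mathbf{h})=\operatorname{trace}\left(S(\mathbf{x})^{-1}S(\mathbf{h})\right)$, and verify the monotonicity of the differential by a Loewner-order argument, with the anti-monotonicity of matrix inversion playing the role that Weyl's monotonicity principle played for $\det$. Your caveats are also well taken: the domain should indeed be $\mathbb{R}_{+}^{n}$, and since $\Phi(\mathbf{0})=-\infty$ the function cannot be subadditive (as the paper itself concedes in its remark on $\log\det$), so the substantive content is exactly the second-order inequality (\ref{eq_str_sub}).
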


We next exhibit some examples involving trace functions. If
$f:\mathbb{R\rightarrow R}$ is a continuously differentiable function, then
the formula%
\[
\Phi(A)=\operatorname*{trace}(f(A))
\]
defines a continuously differentiable function on the Hilbert space
$\operatorname*{Sym}(N,\mathbb{R})$ $($endowed with the Frobenius norm$)$. We
have%
\[
d\Phi(A)X=\operatorname*{trace}\left(  f^{\prime}(A)X\right)  \text{\quad for
all }A,X\in\operatorname*{Sym}(N,\mathbb{R}).
\]
As a consequence, if $f^{\prime}$ is also matrix monotone on $[0,\infty)$,
that is,
\[
A\leq B\text{ in }\operatorname*{Sym}\nolimits^{+}(N,\mathbb{R})\text{ implies
}f^{\prime}(A)\leq f^{\prime}(B)\text{ \ (in the Löwner order),}%
\]
then the function $\Phi$ has nonnegative differences of second order.
According to the Löwner-Heinz Theorem (see \cite{C2010}, Theorem 2.6), some
basic examples of matrix monotone functions on $[0,\infty)$ are the logarithm,
the function $-x^{r}$ for $r\in\lbrack-1,0]$ and $x^{r}$ for $r\in\lbrack
0,1]$. An immediate consequence is as follows.

\begin{corollary}
$($Bouhtou, Gaubert and Sagnol \emph{\cite{BGS2008}}, Lemma $5.3)$ The
function $\Phi(A)=\operatorname*{trace}A^{p}$ is strongly subadditive on
$\operatorname*{Sym}\nolimits^{+}(N,\mathbb{R})$ for $p\in\lbrack0,1]$ and
strongly superadditive for $p\in\lbrack1,2].$ Equivalently, it verifies the
inequality%
\[
\operatorname*{trace}(A+B+C)^{p}+\operatorname*{trace}C^{p}\leq
\operatorname*{trace}(A+C)^{p}+\operatorname*{trace}(B+C)^{p}\text{ for }%
p\in\lbrack0,1]
\]
and%
\[
\operatorname*{trace}(A+B+C)^{p}+\operatorname*{trace}C^{p}\geq
\operatorname*{trace}(A+C)^{p}+\operatorname*{trace}(B+C)^{p}\text{ for }%
p\in\lbrack1,2].
\]

\end{corollary}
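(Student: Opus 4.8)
The plan is to apply Theorem \ref{thm_str_sub_Rn} to the trace function $\Phi(A)=\operatorname*{trace}A^{p}=\operatorname*{trace}(f(A))$ with $f(x)=x^{p}$, using the differentiability discussion that immediately precedes the statement. First I would record that $\Phi$ vanishes at the origin, $\Phi(\mathbf{0})=\operatorname*{trace}(0^{p})=0$, so that the sign condition $\Phi(\mathbf{0})\geq0$ (for strong subadditivity) and $\Phi(\mathbf{0})\leq0$ (for strong superadditivity) both hold trivially. By the formula recalled above, the differential is $d\Phi(A)X=\operatorname*{trace}(f'(A)X)=p\operatorname*{trace}(A^{p-1}X)$, and the monotonicity of the map $A\mapsto d\Phi(A)$ as an element of $L(\operatorname*{Sym}(N,\mathbb{R}),\mathbb{R})$ is governed entirely by the monotonicity of $A\mapsto f'(A)=pA^{p-1}$ in the L\"owner order. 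The whole argument therefore reduces to checking matrix monotonicity of $f'$ on $[0,\infty)$.

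Next I would split into the two ranges of $p$. For $p\in[0,1]$ we have $f'(x)=px^{p-1}$ with exponent $p-1\in[-1,0]$; by the L\"owner--Heinz theorem quoted just before the corollary, $x^{r}$ is matrix monotone \emph{decreasing} for $r\in[-1,0]$ (equivalently $-x^{r}$ is matrix monotone increasing). Hence $A\leq B$ forces $f'(A)\geq f'(B)$, i.e. the differential $d\Phi$ is nonincreasing as a map from $\operatorname*{Sym}^{+}(N,\mathbb{R})$ into $L(\operatorname*{Sym}(N,\mathbb{R}),\mathbb{R})$, where as usual $S\leq T$ means $S(C)\leq T(C)$ for all $C\in\operatorname*{Sym}^{+}(N,\mathbb{R})$. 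Part $(i)$ of Theorem \ref{thm_str_sub_Rn} then yields strong subadditivity. For $p\in[1,2]$ we have $p-1\in[0,1]$, and again by L\"owner--Heinz $x^{r}$ is matrix monotone increasing for $r\in[0,1]$; thus $A\leq B$ gives $f'(A)\leq f'(B)$, the differential is nondecreasing, and part $(ii)$ yields strong superadditivity.

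The equivalence with the two displayed inequalities is then immediate from the definition of the second-order difference: the nonnegativity of $\Delta_{A}\Delta_{B}\Phi(C)$ is exactly the strong-superadditivity inequality, and nonpositivity is the strong-subadditivity inequality, after unwinding $\Delta_{A}\Delta_{B}\Phi(C)=\Phi(A{+}B{+}C)-\Phi(A{+}C)-\Phi(B{+}C)+\Phi(C)$.

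I expect the only genuine content to be the appeal to matrix monotonicity, and the single point requiring a little care is that the monotonicity $d\Phi(A)\leq d\Phi(B)$ must be verified against all \emph{positive} test directions $C\in\operatorname*{Sym}^{+}(N,\mathbb{R})$, not arbitrary symmetric $C$: the comparison $f'(A)\leq f'(B)$ in the L\"owner order says precisely $\operatorname*{trace}((f'(B)-f'(A))C)\geq0$ for all such $C$, since a positive semidefinite operator has nonnegative trace against any positive semidefinite matrix. This matches exactly the ordering on $L(E,F)$ used in Theorem \ref{thm_str_sub_Rn}, so no extra argument beyond L\"owner--Heinz is needed. The endpoint exponents $p\in\{0,1,2\}$ cause no trouble, since $f'$ is respectively the (monotone) constant, the identity, and $2A$, all matrix monotone.
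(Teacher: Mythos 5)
Your proposal is correct and follows essentially the same route as the paper: the corollary is stated there as an immediate consequence of the preceding discussion of trace functions, namely the formula $d\Phi(A)X=\operatorname{trace}(f'(A)X)$, the L\"owner--Heinz theorem applied to $f'(x)=px^{p-1}$ on the two exponent ranges, and Theorem \ref{thm_str_sub_Rn} together with $\Phi(\mathbf{0})\geq 0$ (resp.\ $\leq 0$). Your added remark that the order on $L(E,F)$ is tested only against positive directions $C$, and that this is exactly what the L\"owner comparison $f'(A)\leq f'(B)$ delivers via $\operatorname{trace}\bigl((f'(B)-f'(A))C\bigr)\geq 0$, is a correct and useful elaboration of a step the paper leaves implicit.
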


In a similar way, the trace function associated to the function $f(t)=\int
_{0}^{t}(1+s^{p})^{1/p}ds$ is strongly superadditive on $\operatorname*{Sym}%
\nolimits^{+}(n,\mathbb{R})$ for $p\in(0,1].$ The fact that the derivative
$f^{\prime}(t)=(1+t^{p})^{1/p}$ is operator monotone on $[0,\infty)$ for
$p\in(0,1]$ is proved in a paper of Hansen \cite{Hansen}, page 810.

\begin{remark}
The \emph{von Neumann entropy},\emph{ }which is defined by the formula
\[
S(A)=-\operatorname*{trace}(A\log A)\text{\quad for }A\in\operatorname*{Sym}%
\nolimits^{+}(N,\mathbb{R}),
\]
represents an example of strongly subadditive trace function motivated by
Theorem \emph{\ref{thm_str_sub_Rn}} $(ii).$

A much deeper result in quantum theory is the strong subadditivity of the von
Neumann entropy for a tripartite quantum state. See Carlen \emph{\cite{C2010}%
}, for details and Audenaert, Hiai and Petz \emph{\cite{AHP}} for a generalization.
\end{remark}

\begin{remark}
$($The case of the function $\log\det)$ As is well known $($see
\emph{\cite{NP2025}}, Example $4.1.5\emph{)}$ the function $\log\det$ is
concave on $\operatorname*{Sym}\nolimits^{++}(n,\mathbb{R}).$ It is neither
subadditive nor super additive, but it verifies the condition
\[
\log\det\left(  A+B+C\right)  +\log\det C\geq\log\det\left(  A+C\right)
+\log\det\left(  B+C\right)  .
\]

See \emph{\cite{TCL2011}}, Lemma $2.3$. The connection of this inequality with
the entropy inequalities \emph{(}specialized to Gaussian distributed vectors
with prescribed covariances\emph{) }is discussed by Lami, Hirche, Adesso and
Winter \emph{\cite{LHAW2017}}.
\end{remark}

We pass now to the case of twice differentiable functions.

If $\Phi:\mathbb{R}_{+}^{N}\rightarrow\mathbb{R}$ is a function continuous on
$\mathbb{R}_{+}^{N}$ and twice differentiable on $\mathbb{R}_{++}^{N},$ then
for all points $\mathbf{x},\mathbf{z}\in\mathbb{R}_{+}^{N},$%
\[
\Delta_{x}\mathbf{\Phi}(\mathbf{z})=\mathbf{\Phi}(\mathbf{z}+\mathbf{x}%
)-\mathbf{\Phi}(\mathbf{z})=\int_{0}^{1}\langle\mathbf{x},\nabla\mathbf{\Phi
}(\mathbf{z}+t\mathbf{x})\rangle\mathrm{d}t.
\]
See \cite{DN2011}, Lemma 5.40, p. 264. As a consequence,%
\begin{align*}
\Delta_{\mathbf{y}}\Delta_{\mathbf{x}}\mathbf{\Phi}(\mathbf{z})  &
=\mathbf{\Phi}(\mathbf{z}+\mathbf{x}+\mathbf{y})-\mathbf{\Phi}(\mathbf{z}%
+\mathbf{x})-\mathbf{\Phi}(\mathbf{z}+\mathbf{y})+\mathbf{\Phi}(\mathbf{z}%
)=\Delta_{\mathbf{y}}\Delta_{\mathbf{x}}\mathbf{\Phi}(\mathbf{z})\\
&  =\Delta_{\mathbf{y}}\left(  \int_{0}^{1}\langle\mathbf{x},\nabla
\mathbf{\Phi}(\mathbf{z}+t\mathbf{x})\rangle\mathrm{d}t\right) \\
&  =\int_{0}^{1}\int_{0}^{1}\langle\nabla^{2}\mathbf{\Phi}(\mathbf{z}%
+s\mathbf{y}+t\mathbf{x})\mathbf{y},\mathbf{x}\rangle\mathrm{d}s\mathrm{d}t
\end{align*}
and this remark leads easily to the following analog of Theorem
\ref{thm_str_sub_1var} in the case of functions of several variables.

\begin{theorem}
\label{thm_str_sub_several var_2diff}Suppose that $\ \Phi\ $belongs either to
$C(\mathbf{R}_{+}^{N})\cap C^{2}(\mathbb{R}_{++}^{N})$ or to $C^{2}%
(\mathbb{R}^{N}).$ Then:

$(i)$ $\Phi$ is strongly subadditive if and only if $\Phi(\mathbf{0})\geq0$
and all partial derivatives of second order $\frac{\partial^{2}\Phi}{\partial
x_{i}\partial x_{j}}$ are nonpositive$;$

$(ii)$ $\Phi$ is strongly superadditive if and only if $\Phi(\mathbf{0})\leq0$
and all partial derivatives of second order $\frac{\partial^{2}\Phi}{\partial
x_{i}\partial x_{j}}$ are nonnegative.
\end{theorem}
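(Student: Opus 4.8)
The plan is to reduce the statement to a pointwise sign condition on the Hessian by means of the double-integral representation of the second-order difference established immediately above, and then to read off the two implications directly from that representation. Throughout, the underlying cone is $\mathbb{R}_{+}^{N}$, which contains the origin; hence, as recorded in the Introduction, $\Phi$ is strongly subadditive if and only if $\Phi(\mathbf{0})\geq0$ together with $\Delta_{\mathbf{x}}\Delta_{\mathbf{y}}\Phi(\mathbf{z})\leq0$ for all $\mathbf{x},\mathbf{y},\mathbf{z}\in\mathbb{R}_{+}^{N}$ (the passage from the second-order inequality back to ordinary subadditivity being obtained by setting $\mathbf{z}=\mathbf{0}$ and using $\Phi(\mathbf{0})\geq0$). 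Consequently, for part $(i)$ it suffices to prove that
\[
\Delta_{\mathbf{x}}\Delta_{\mathbf{y}}\Phi(\mathbf{z})\leq0\text{ for all }\mathbf{x},\mathbf{y},\mathbf{z}\in\mathbb{R}_{+}^{N}\quad\Longleftrightarrow\quad\frac{\partial^{2}\Phi}{\partial x_{i}\partial x_{j}}\leq0\text{ on }\mathbb{R}_{++}^{N}\text{ for all }i,j,
\]
the bridge in both directions being the identity $\Delta_{\mathbf{y}}\Delta_{\mathbf{x}}\Phi(\mathbf{z})=\int_{0}^{1}\int_{0}^{1}\langle\nabla^{2}\Phi(\mathbf{z}+s\mathbf{y}+t\mathbf{x})\mathbf{y},\mathbf{x}\rangle\,\mathrm{d}s\,\mathrm{d}t$ derived above.

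For the implication ``$\Leftarrow$'', I would first fix $\mathbf{z}\in\mathbb{R}_{++}^{N}$ and $\mathbf{x},\mathbf{y}\in\mathbb{R}_{+}^{N}$. Since $\mathbf{z}$ is strictly positive and $s\mathbf{y}+t\mathbf{x}\geq0$, the whole square $\{\mathbf{z}+s\mathbf{y}+t\mathbf{x}:s,t\in[0,1]\}$ stays in $\mathbb{R}_{++}^{N}$, so the integral representation is legitimate. Writing $\langle\nabla^{2}\Phi(\cdot)\mathbf{y},\mathbf{x}\rangle=\sum_{i,j}\frac{\partial^{2}\Phi}{\partial x_{i}\partial x_{j}}(\cdot)\,x_{i}y_{j}$, each summand is a product of a nonpositive Hessian entry with the nonnegative numbers $x_{i},y_{j}$, so the integrand is $\leq0$ and therefore $\Delta_{\mathbf{y}}\Delta_{\mathbf{x}}\Phi(\mathbf{z})\leq0$. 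Because the second-order difference is a fixed finite combination of values of the continuous function $\Phi$, it depends continuously on $\mathbf{z}$, and so the inequality passes to the boundary points $\mathbf{z}\in\mathbb{R}_{+}^{N}\setminus\mathbb{R}_{++}^{N}$ by a limiting argument.

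For ``$\Rightarrow$'', I would fix $\mathbf{z}\in\mathbb{R}_{++}^{N}$ and a pair of indices $i,j$, and specialize the same identity to $\mathbf{x}=t\mathbf{e}_{i}$ and $\mathbf{y}=s\mathbf{e}_{j}$ with $s,t>0$ small. This yields
\[
\frac{\Delta_{s\mathbf{e}_{j}}\Delta_{t\mathbf{e}_{i}}\Phi(\mathbf{z})}{st}=\int_{0}^{1}\int_{0}^{1}\frac{\partial^{2}\Phi}{\partial x_{i}\partial x_{j}}(\mathbf{z}+\sigma s\mathbf{e}_{j}+\tau t\mathbf{e}_{i})\,\mathrm{d}\sigma\,\mathrm{d}\tau,
\]
whose left-hand side is $\leq0$ by hypothesis while its right-hand side tends to $\frac{\partial^{2}\Phi}{\partial x_{i}\partial x_{j}}(\mathbf{z})$ as $s,t\rightarrow0^{+}$, by continuity of the second partials. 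Hence $\frac{\partial^{2}\Phi}{\partial x_{i}\partial x_{j}}(\mathbf{z})\leq0$ for every $\mathbf{z}\in\mathbb{R}_{++}^{N}$, and when $\Phi\in C^{2}(\mathbb{R}^{N})$ continuity extends this from $\mathbb{R}_{++}^{N}$ to its closure $\mathbb{R}_{+}^{N}$.

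The two genuinely delicate points are the legitimacy of the integral representation up to the boundary of the cone and the interchange of the sign with the limit in the difference quotient; both are handled by carrying out all differentiation inside the open cone $\mathbb{R}_{++}^{N}$, where $\Phi$ is of class $C^{2}$. Continuity of $\Phi$ on $\mathbb{R}_{+}^{N}$ lets the second-order difference inequality descend to the boundary in the ``$\Leftarrow$'' direction, while (only in the $C^{2}(\mathbb{R}^{N})$ case) continuity of the second partials lets their nonpositivity ascend to the closed cone in the ``$\Rightarrow$'' direction. Finally, part $(ii)$ requires no new work: applying part $(i)$ to $-\Phi$ and using that $\Phi$ is strongly superadditive exactly when $-\Phi$ is strongly subadditive, that $\Phi(\mathbf{0})\leq0\Leftrightarrow(-\Phi)(\mathbf{0})\geq0$, and that the Hessian of $-\Phi$ is the negative of that of $\Phi$, converts the nonpositivity of the second partials into nonnegativity and completes the proof.
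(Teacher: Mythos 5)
Your proposal is correct and follows exactly the route the paper intends: the paper derives the double-integral representation $\Delta_{\mathbf{y}}\Delta_{\mathbf{x}}\Phi(\mathbf{z})=\int_{0}^{1}\int_{0}^{1}\langle\nabla^{2}\Phi(\mathbf{z}+s\mathbf{y}+t\mathbf{x})\mathbf{y},\mathbf{x}\rangle\,\mathrm{d}s\,\mathrm{d}t$ immediately before the statement and declares that it ``leads easily'' to the theorem, and your argument is precisely that deduction with the details (sign of the integrand for ``$\Leftarrow$'', specialization to $\mathbf{x}=t\mathbf{e}_{i}$, $\mathbf{y}=s\mathbf{e}_{j}$ and a limit for ``$\Rightarrow$'', continuity to reach the boundary, and reduction of $(ii)$ to $(i)$ via $-\Phi$) filled in.
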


An immediate consequence is the strong superadditivity on $\mathbf{R}_{+}^{N}$
of all polynomials functions $P(\mathbf{x})=P(x_{1},...,x_{N})$ with
nonnegative coefficients and a zero constant term. In particular, this is the
case of the Euclidean scalar product.

Also, according to Theorem \ref{thm_str_sub_several var_2diff}, the class of
strongly subadditive functions on $\mathbf{R}_{+}^{N}$ includes all functions
of the form $\Phi(\mathbf{x})=\sum\nolimits_{1\leq i<j\leq N}f_{i,j}%
(x_{i}-x_{j})$ for $f_{i,j}\in C^{2}(\mathbb{R})$ convex and all functions of
the form $\Psi(\mathbf{x})=f\left(  \sum\nolimits_{i=1}^{N}\lambda_{i}%
x_{i}\right)  -\sum\nolimits_{i=1}^{N}\lambda_{i}f(x_{i})$ for $f\in$
$C^{2}(\mathbb{R})$ concave and $(\lambda_{i})_{i=1}^{N}$ nonnegative weights.
Notice that in general $\Phi$ and $\Psi$ are not convex or concave.

\section{The connection with submodular functions}

In what follows $E$ denotes a Banach lattice and $\mathcal{C}$ is either its
positive cone $E_{+}$ or the entire space $E$. A function $\Phi:\mathcal{C}%
\rightarrow\mathbb{R}$ is called \emph{submodular} if
\[
\Phi\left(  \mathbf{x}\vee\mathbf{y}\right)  +\Phi\left(  \mathbf{x}%
\wedge\mathbf{y}\right)  \leq\Phi(\mathbf{x})+\Phi(\mathbf{y})\text{ for all
}\mathbf{x},\mathbf{y}\in\mathcal{C};
\]
$\Phi$ is called \emph{supermodular} if $-\Phi$ is submodular. Here
$\mathbf{x}\vee\mathbf{y}=\sup\left\{  \mathbf{x},\mathbf{y}\right\}  $ and
$\mathbf{x}\wedge\mathbf{y}=\inf\left\{  \mathbf{x},\mathbf{y}\right\}  $ (two
classical notations in the case of Banach lattices). A straightforward
characterization of these functions in the case of the Euclidean space endowed
with the coordinatewise order is as follows:

\begin{theorem}
\emph{(}Topkis' characterization theorem; see \emph{\cite{MR1990})} If
$\mathcal{C}$ is either the positive cone $\mathbb{R}_{+}^{N}$ or the entire
space $\mathbb{R}^{N}$ and $\Phi:\mathcal{C}\rightarrow\mathbb{R}$ is
continuous on $\mathcal{C}$ and continuously differentiable on
$\operatorname{int}\mathcal{C},$ then $\Phi$ is supermodular if and only if
\[
\frac{\partial^{2}\Phi}{\partial x_{i}\partial x_{j}}\geq0\text{\quad for all
}i\neq j.
\]

\end{theorem}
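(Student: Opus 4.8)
The plan is to reduce supermodularity, which a priori couples all $N$ coordinates at once, to a purely pairwise condition involving only two coordinates at a time, and then to translate that pairwise condition into the sign of the mixed second partials. Throughout write $\mathbf{e}_1,\dots,\mathbf{e}_N$ for the standard basis and, for $i\neq j$ and $h,k>0$, abbreviate the coordinate second difference
\[
D_{i,j}(\mathbf{z};h,k)=\Phi(\mathbf{z}+h\mathbf{e}_i+k\mathbf{e}_j)-\Phi(\mathbf{z}+h\mathbf{e}_i)-\Phi(\mathbf{z}+k\mathbf{e}_j)+\Phi(\mathbf{z}).
\]
The first step is to prove that $\Phi$ is supermodular if and only if it has the \emph{increasing differences} property, namely $D_{i,j}(\mathbf{z};h,k)\geq0$ for all $i\neq j$, all admissible $\mathbf{z}$ and all $h,k>0$; the second step is to identify this property with nonnegativity of the mixed partials.

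For the equivalence, the easy implication is the specialization $\mathbf{x}=\mathbf{z}+h\mathbf{e}_i$, $\mathbf{y}=\mathbf{z}+k\mathbf{e}_j$: since $h,k>0$ we have $\mathbf{x}\vee\mathbf{y}=\mathbf{z}+h\mathbf{e}_i+k\mathbf{e}_j$ and $\mathbf{x}\wedge\mathbf{y}=\mathbf{z}$, so supermodularity applied to this pair reads exactly $D_{i,j}(\mathbf{z};h,k)\geq0$. Thus supermodularity trivially forces increasing differences, and the whole content lies in the reverse direction.

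The hard part, and the main obstacle, is to recover full supermodularity from the pairwise inequalities. For arbitrary $\mathbf{x},\mathbf{y}$ I would set $m=\mathbf{x}\wedge\mathbf{y}$, $M=\mathbf{x}\vee\mathbf{y}$, $S=\{i:x_i>y_i\}$ and $T=\{j:y_j>x_j\}$ (disjoint), and check the four relations $\mathbf{y}=m+\sum_{j\in T}(y_j-x_j)\mathbf{e}_j$, $M=\mathbf{x}+\sum_{j\in T}(y_j-x_j)\mathbf{e}_j$, and $M=\mathbf{y}+\sum_{i\in S}(x_i-y_i)\mathbf{e}_i$, $\mathbf{x}=m+\sum_{i\in S}(x_i-y_i)\mathbf{e}_i$. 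The defect $\Phi(M)+\Phi(m)-\Phi(\mathbf{x})-\Phi(\mathbf{y})$ is then handled by a double telescoping: adding the $S$-increments one coordinate at a time passes from $\mathbf{y}$ to $M$ and simultaneously from $m$ to $\mathbf{x}$, so the defect becomes a sum of terms of the form $\Delta_{h\mathbf{e}_i}\Phi$ evaluated at two points that differ only by the $T$-increments; a second telescoping over the $T$-coordinates decomposes each such term into a sum of quantities $D_{i,j}$ with $i\in S$, $j\in T$, each nonnegative by hypothesis. This path bookkeeping is the only genuinely combinatorial step, and the care needed is to keep every intermediate point inside $\mathcal{C}$, which holds because all increments are nonnegative and $\mathcal{C}$ is $\mathbb{R}_+^N$ or $\mathbb{R}^N$.

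With the equivalence established, the remaining step is calculus and is already available from the integral representation of $\Delta_{\mathbf{y}}\Delta_{\mathbf{x}}\Phi(\mathbf{z})$ obtained above: under the $C^2$ regularity on $\operatorname{int}\mathcal{C}$ that the stated second-order condition presupposes, specializing that formula to $\mathbf{x}=h\mathbf{e}_i$, $\mathbf{y}=k\mathbf{e}_j$ gives
\[
D_{i,j}(\mathbf{z};h,k)=hk\int_0^1\!\!\int_0^1\frac{\partial^2\Phi}{\partial x_i\partial x_j}(\mathbf{z}+sk\mathbf{e}_j+th\mathbf{e}_i)\,ds\,dt ,
\]
valid on either cone by the same computation. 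Hence nonnegativity of the mixed partials forces $D_{i,j}\geq0$, while conversely dividing by $hk$ and letting $h,k\to0+$ recovers $\partial^2\Phi/\partial x_i\partial x_j\geq0$. Finally I would extend the inequalities from $\operatorname{int}\mathcal{C}$ to all of $\mathcal{C}$ by the assumed continuity of $\Phi$, and note that only pairs $i\neq j$ ever enter the argument, which is precisely why no condition is imposed on the pure second derivatives $\partial^2\Phi/\partial x_i^2$.
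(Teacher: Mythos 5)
The paper does not prove this theorem at all: it is stated as a quoted result with a pointer to Milgrom--Roberts \cite{MR1990}, so there is no ``paper proof'' to compare against. Judged on its own, your argument is correct and is essentially the standard Topkis-style proof: the reduction of supermodularity to the pairwise increasing-differences property via the lattice identities $\mathbf{x}=m+\sum_{i\in S}(x_i-y_i)\mathbf{e}_i$, $M=\mathbf{y}+\sum_{i\in S}(x_i-y_i)\mathbf{e}_i$ and the double telescoping is exactly the right combinatorial step (the terms $D_{i,j}$ that appear all have $i\in S$, $j\in T$ with $S\cap T=\emptyset$, so only mixed pairs $i\neq j$ ever occur, which is why the pure second derivatives play no role), and the calculus step correctly reuses the paper's integral representation $\Delta_{\mathbf{y}}\Delta_{\mathbf{x}}\Phi(\mathbf{z})=\int_0^1\int_0^1\langle\nabla^2\Phi(\mathbf{z}+s\mathbf{y}+t\mathbf{x})\mathbf{y},\mathbf{x}\rangle\,ds\,dt$ specialized to coordinate directions, with the limit $h,k\to0^+$ giving the converse. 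Two small points you flag but should make explicit if this were written out: the theorem as stated assumes only $C^1$ regularity while the displayed condition requires second partials, so one must work under the $C^2$ hypothesis on $\operatorname{int}\mathcal{C}$ (which is how the paper actually uses the result, e.g.\ in Corollary \ref{cor_str_submod}); and on $\mathbb{R}_+^N$ the integral representation is only available when the segment $\mathbf{z}+s k\mathbf{e}_j+t h\mathbf{e}_i$ lies in the interior, so the passage to boundary points needs the approximation $\mathbf{x}\mapsto\mathbf{x}+\varepsilon\mathbf{1}$, $\mathbf{y}\mapsto\mathbf{y}+\varepsilon\mathbf{1}$ (which commutes with $\vee$ and $\wedge$) followed by $\varepsilon\to0^+$ using continuity of $\Phi$. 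With those details filled in, the proof is complete.
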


Combining this result with Theorem \ref{thm_str_sub_several var_2diff}, we
arrive at the following consequence.

\begin{corollary}
\label{cor_str_submod}Every strongly subadditive \emph{(}respectively strongly
superadditive\emph{) }function that belongs either to $C(\mathbf{R}_{+}%
^{N})\cap C^{2}(\mathbb{R}_{++}^{N})$ or to $C^{2}(\mathbb{R}^{N})$, is
submodular \emph{(}respectively supermodular\emph{)}.
\end{corollary}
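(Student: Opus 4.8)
The plan is to deduce Corollary~\ref{cor_str_submod} by combining the two second-order characterizations that have already been established. The key observation is that both strong subadditivity/superadditivity (via Theorem~\ref{thm_str_sub_several var_2diff}) and submodularity/supermodularity (via the Topkis characterization theorem) are expressed entirely through the sign of the second-order partial derivatives $\partial^{2}\Phi/\partial x_{i}\partial x_{j}$. The difference between the two hypotheses is merely a matter of which indices are involved: strong sub/superadditivity constrains \emph{all} pairs $(i,j)$, including the diagonal $i=j$, whereas submodularity/supermodularity only constrains the off-diagonal pairs $i\neq j$.

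First I would fix a function $\Phi$ in $C(\mathbf{R}_{+}^{N})\cap C^{2}(\mathbb{R}_{++}^{N})$ (the case $\Phi\in C^{2}(\mathbb{R}^{N})$ being identical word for word) and assume it is strongly superadditive. By Theorem~\ref{thm_str_sub_several var_2diff}$(ii)$, this is equivalent to $\Phi(\mathbf{0})\leq 0$ together with
\[
\frac{\partial^{2}\Phi}{\partial x_{i}\partial x_{j}}\geq 0\quad\text{for all }i,j\in\{1,\dots,N\}.
\]
In particular the inequality holds for every pair with $i\neq j$. This is exactly the hypothesis appearing in Topkis' characterization theorem, so I would invoke that theorem to conclude that $\Phi$ is supermodular. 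The value $\Phi(\mathbf{0})\leq 0$ plays no role in the submodularity conclusion and is simply discarded. For the strongly subadditive case one applies Theorem~\ref{thm_str_sub_several var_2diff}$(i)$ to get $\partial^{2}\Phi/\partial x_{i}\partial x_{j}\leq 0$ for all $i,j$, then applies the Topkis theorem to $-\Phi$ (whose mixed partials are nonnegative off the diagonal) to conclude that $-\Phi$ is supermodular, i.e.\ $\Phi$ is submodular.

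There is essentially no obstacle here; the whole content is the logical implication ``all pairs'' $\Rightarrow$ ``off-diagonal pairs,'' and the proof is a one-line deduction once the two characterization theorems are laid side by side. The only point requiring a moment of care is the hypothesis bookkeeping: Theorem~\ref{thm_str_sub_several var_2diff} and the Topkis theorem are stated under the same regularity assumptions on $\mathcal{C}$ (either $\mathbb{R}_{+}^{N}$ with its interior $\mathbb{R}_{++}^{N}$, or all of $\mathbb{R}^{N}$) and the same differentiability class, so the hypotheses of the corollary transfer directly and no additional approximation or limiting argument is needed. I would also note that the converse fails, since supermodularity constrains only off-diagonal second derivatives and imposes nothing on the diagonal terms $\partial^{2}\Phi/\partial x_{i}^{2}$ nor on the sign of $\Phi(\mathbf{0})$; the log-sum-exp function mentioned in the introduction is the standard witness to this failure.
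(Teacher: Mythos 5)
Your proposal is correct and follows exactly the paper's argument: the paper likewise obtains the corollary by combining Theorem \ref{thm_str_sub_several var_2diff} (sign of all second-order partials) with Topkis' characterization theorem (sign of the off-diagonal ones), the implication being simply that the stronger sign condition on all pairs $(i,j)$ contains the off-diagonal condition. Your additional remarks on the hypothesis bookkeeping and on the failure of the converse via the log-sum-exp function match the paper's discussion as well.
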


The converse does not work in general. See the case of the log-sum-exp
function, defined on $\mathbb{R}^{N}$ by the formula
\[
\operatorname*{LSE}(\mathbf{x})=\log\left(  \frac{1}{N}\sum_{k=1}^{N}e^{x_{k}%
}\right)  ,\text{\quad}\mathbf{x}\in\mathbb{R}^{N}.
\]
Clearly,%
\[
\frac{\partial^{2}\operatorname*{LSE}}{\partial x_{i}\partial x_{j}%
}(\mathbf{x})=-\frac{e^{x_{i}}e^{x_{j}}}{\left(  \sum_{k=1}^{N}e^{x_{k}%
}\right)  ^{2}}<0\text{\quad for all }i\neq j,
\]
so by Topkis' characterization theorem, the function $\operatorname*{LSE}$ is
submodular. Moreover, $\operatorname*{LSE}(\mathbf{0})=0$ but this function is
not strongly subadditive. For example for $N=2$ and $x=(1.0),$ $y=(0,1)$ and
$z=(1,1)$ we have%
\begin{multline*}
\operatorname*{LSE}(\mathbf{x}+\mathbf{y}+\mathbf{z})+\operatorname*{LSE}%
(\mathbf{z})-\operatorname*{LSE}(\mathbf{x}+\mathbf{z})-\operatorname*{LSE}%
(\mathbf{y}+\mathbf{z})\\
=\log\left(  \frac{1}{2}\left(  e^{2}+e^{3}\right)  \right)  +\log\left(
\frac{1}{2}\left(  e+e\right)  \right) \\
-\log\left(  \frac{1}{2}\left(  e^{2}+e\right)  \right)  -\log\left(  \frac
{1}{2}\left(  e+e^{2}\right)  \right)  >\allowbreak0.379\,.
\end{multline*}

\begin{remark}
\label{rem_com_str_superad}Somehow surprisingly, the log-sum-exp function
verifies the following property of comonotonic strong superadditivity:
\[
\operatorname*{LSE}(\mathbf{x}+\mathbf{y}+\mathbf{z})+\operatorname*{LSE}%
(\mathbf{z})\geq\operatorname*{LSE}(\mathbf{x}+\mathbf{z})+\operatorname*{LSE}%
(\mathbf{y}+\mathbf{z})
\]
for all $\mathbf{x},\mathbf{y},\mathbf{z}\in\mathbb{R}_{+}^{N}$ such that
$\mathbf{x}$ and $\mathbf{y}$ are comonotonic. Recall that two vectors
$\mathbf{u},\mathbf{v}\in\mathbb{R}^{N}$ are \emph{comonotonic} if%
\[
\left(  u_{i}-u_{j}\right)  \left(  v_{i}-v_{j}\right)  \geq0\text{\quad for
all indices }i,j\in\{1,...,N\}.
\]

Comonotonicity appeared in non-linear expected utility theory and is genuine
in the context of Choquet's integrability. See \emph{\cite{Denn}} and
\emph{\cite{GN2021}}. Two simple cases of comonotonicity are
\[
u_{1}\leq\cdots\leq u_{n},\text{ }v_{1}\leq\cdots\leq v_{n}\text{ and }%
u_{1}\geq\cdots\geq u_{n},\text{ }v_{1}\geq\cdots\geq v_{n}.
\]

The property of comonotonic strong superadditivity of the function
$\operatorname*{LSE}$ is a direct consequence of Chebyshev's algebraic
inequality$:$
\[
\langle\mathbf{u},\mathbf{p}\rangle\langle\mathbf{v},\mathbf{p}\rangle
\leq\langle\mathbf{uv},\mathbf{p}\rangle
\]
whenever $\mathbf{u},\mathbf{v}\in\mathbb{R}^{N}$ are comonotonic and
$\mathbf{p}=(p_{1},...,p_{N})$ is a probability vector \emph{(}that is,
$p_{i}\geq0$ for all $i$ and $\sum\nolimits_{i=1}^{N}p_{i}=1).~$See
\emph{\cite{NP2025}}, Section $2.2$, Exercise $8$.
\end{remark}

An important source of submodular functions is provided by the theory of
Choquet integral. See \cite{GN2021} for a brief presentation of this integral
(including a discussion on its submodularity).

\section{Strong superadditivity from complete monotonicity}

An important class of strongly superadditive functions motivated by Theorem
\ref{thm_str_sub_several var_2diff} is that of completely monotone functions.
See Schilling, Song and Vondra\v{c}ek \cite{SSV} and Scott and Sokal
\cite{SS2014} for details.

Recall that an infinitely many differentiable function $\Phi:$ $(0,\infty
)^{N}\rightarrow\lbrack0,\infty)$ is called \emph{completely monotone} if%
\begin{equation}
\left(  -1\right)  ^{k}\frac{\partial^{k}\Phi}{\partial x_{i_{1}}\partial
x_{i_{2}}\cdots\partial x_{i_{k}}}(\mathbf{x})\geq0\label{eq_cm_RN}%
\end{equation}
for all $\mathbf{x}\in\left(  0,\infty\right)  ^{N}$, all integers $k\geq1$
and all choices of indices $i_{1},\cdots\imath_{k}$; equivalently, if%
\begin{equation}
(-1)^{k}\Delta_{\mathbf{x}_{1}}\Delta_{\mathbf{x}_{2}}\cdots\Delta
_{\mathbf{x}_{k}}f(\mathbf{x})\geq0\label{eq_cm_differences}%
\end{equation}
for all vectors $\mathbf{x},\mathbf{x}_{1},...,$ $\mathbf{x}_{k}$
$\in(0,\infty)^{N}$ and all integers $k\geq1.$ A function $\Phi$ with this
property may be unbounded at the origin. If it is continuous at the origin we
say that it is \emph{completely monotone }on $[0,\infty)^{N}$.

It is straightforward to prove that for all $\alpha_{1},...,\alpha_{N}>0$ the
function
\[
e^{-\alpha_{1}x_{1}-\cdots-\alpha_{N}x_{N}}%
\]
is completely monotone on $[0,\infty)^{N},$ while the function $x_{1}%
^{-\alpha_{1}}x_{2}^{-\alpha_{2}}\cdots x_{N}^{-\alpha_{N}}$ is completely
monotone on $(0,\infty)^{N}$.

Due to the characterization (\ref{eq_cm_differences}), every completely
monotone function is 2-monotone increasing. If a completely monotone function
attains the value 0, then it vanishes everywhere, so except for this trivial
case it cannot be strongly superadditive. However, composing it with a
translation and subtracting the value at the origin one obtains a strongly
superadditive function.

\begin{example}
According to \emph{\cite{SS2014}}, Example $2.5$. p. $343$, if $a>0,$ then the
function
\[
f_{\beta}(x)=(1+ae^{-x})^{\beta}%
\]
is completely monotone on $[0,\infty)$ if and only if $\beta\in\{0,1,2,...\}.$
However, according to Theorem \emph{\ref{thm_str_sub_1var}} the function
$f_{\beta}(x)-(1+a)^{\beta}$ is strongly superadditive for all $\beta
\in\{0\}\cup\lbrack1,\infty).$
\end{example}

\begin{example}
According to \emph{\cite{SS2014}}, Corollary $1.6$, p. $329$, the function
\[
\Phi(\mathbf{x})=\left(  x_{1}x_{2}+x_{1}x_{3}+x_{1}x_{4}+x_{2}x_{3}%
+x_{2}x_{4}+x_{3}x_{4}\right)  ^{-\beta}%
\]
is completely monotone on $(0,\infty)^{4}$ if and only if $\beta=0$ or
$\beta\geq1.$ Denoting by $\mathbf{1}$ the vector in $\mathbb{R}^{4}$ whose
components equal unity, we infer that for the same values of the exponents
$\beta$ the function
\[
\Psi(\mathbf{x})=\Phi(\mathbf{x+1})-\Phi(\mathbf{1})
\]
is strongly superadditive on $[0,\infty)^{4}.$
\end{example}

For more examples we need the concept of complete monotonicity for functions
defined on cones more general than $\mathbb{R}_{++}^{N}.$

Let $V$ denote a finite-dimensional real vector space and $\mathcal{C}$ an
open convex cone in $V$ with closure \ $\overline{\mathcal{C}}$. The dual cone
of $\mathcal{C}$ is defined by the formula $\mathcal{C}^{\ast}=\{\mathbf{y}\in
V^{\ast}:\langle\mathbf{y},\mathbf{x}\rangle\geq0$ for all $\mathbf{x}%
\in\mathcal{C}\}$. Thus $\mathcal{C}$ consists of all the linear functionals
that are nonnegative on $\overline{\mathcal{C}}$.

The dual cone of $[0,\infty)^{N}$ in $\mathbb{R}^{N}$ is $[0,\infty)^{N}$ and
the dual cone of $\operatorname*{Sym}\nolimits^{+}(N,\mathbb{R})$ in
$\operatorname*{Sym}(N,\mathbb{R})$ is again $\operatorname*{Sym}%
\nolimits^{+}(N,\mathbb{R})$; see Example 2.24, Section 2.6 of Boyd and
Vandenberghe's Convex Optimization \cite{BV}.

\begin{definition}
\label{def_complete_mon}A function $f:\mathcal{C}\rightarrow$ $\mathbb{R}_{+}$
is called completely monotone%
\index{function!completely monotone}
if $f$ is $\mathcal{C}^{\infty}$ on $\mathcal{C}$ and, for all integers
$k\geq1$ and all vectors $\mathbf{v}_{1},...,$ $\mathbf{v}_{k}$ $\in
\mathcal{C}$, we have%
\[
\left(  -1\right)  ^{k}\partial_{\mathbf{v}_{1}}\cdots\partial_{\mathbf{v}%
_{k}}f(\mathbf{x})\geq0\text{\quad for all }\mathbf{x}\in\mathcal{C}.
\]
Here $\partial_{\mathbf{v}}$ denotes the directional derivative along the
vector $\mathbf{v}$.

A function $f:\overline{\mathcal{C}}\rightarrow$ $\mathbb{R}_{+}$ is called
completely monotone if it is the continuous extension of a completely monotone
function on $\mathcal{C}$.
\end{definition}

An important result in the theory of completely monotone functions on cones is
the following representation theorem.

\begin{theorem}
\emph{(Bernstein-Hausdorff-Widder-Choquet theorem)}.\label{thm_BHWCh} Let
$\mathcal{C}$ be an open convex cone in a finite-dimensional real vector
space. A continuous function $\Phi:\overline{\mathcal{C}}\mathcal{\rightarrow
}\mathbb{R}_{+}$ is completely monotone if and only if it is the Laplace
transform of a unique Borel probability measure $\mu$ supported on the dual
cone $\mathcal{C}^{\ast}$, that is,%
\begin{equation}
\Phi(\mathbf{x})=\int_{\mathcal{C}^{\ast}}e^{-\langle\mathbf{x},\mathbf{y}%
\rangle}\mathrm{d}\mu(\mathbf{y})\text{\quad for all }\mathbf{x}\in
\overline{\mathcal{C}}. \label{eq_BHWC}%
\end{equation}

\end{theorem}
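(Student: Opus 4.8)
The plan is to prove sufficiency and uniqueness by direct manipulation of the Laplace integral, and to obtain the representing measure in the converse direction from Choquet's theorem applied to the extreme points of a suitable compact convex set of completely monotone functions.

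\emph{Sufficiency and uniqueness.} If $\Phi(\mathbf{x})=\int_{\mathcal{C}^{\ast}}e^{-\langle\mathbf{x},\mathbf{y}\rangle}\,\mathrm{d}\mu(\mathbf{y})$, then for $\mathbf{x}\in\mathcal{C}$ and $\mathbf{v}_{1},\dots,\mathbf{v}_{k}\in\mathcal{C}$ I would differentiate under the integral sign to obtain
\[
\partial_{\mathbf{v}_{1}}\cdots\partial_{\mathbf{v}_{k}}\Phi(\mathbf{x})=(-1)^{k}\int_{\mathcal{C}^{\ast}}\langle\mathbf{v}_{1},\mathbf{y}\rangle\cdots\langle\mathbf{v}_{k},\mathbf{y}\rangle\,e^{-\langle\mathbf{x},\mathbf{y}\rangle}\,\mathrm{d}\mu(\mathbf{y}),
\]
whose integrand is nonnegative since $\mathbf{v}_{i}\in\mathcal{C}$ and $\mathbf{y}\in\mathcal{C}^{\ast}$ force $\langle\mathbf{v}_{i},\mathbf{y}\rangle\geq0$; this gives $(-1)^{k}\partial_{\mathbf{v}_{1}}\cdots\partial_{\mathbf{v}_{k}}\Phi\geq0$, i.e. complete monotonicity. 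The differentiation is legitimate because every interior point admits a uniform bound $\langle\mathbf{x},\mathbf{y}\rangle\geq c\left\Vert\mathbf{y}\right\Vert$ on $\mathcal{C}^{\ast}$ (by compactness of the unit sphere of the closed cone $\mathcal{C}^{\ast}$), so the polynomially weighted exponentials are bounded, hence integrable against the finite measure $\mu$, and they provide a dominating function on a neighborhood of $\mathbf{x}$. Uniqueness follows because the exponentials separate finite measures on $\mathcal{C}^{\ast}$: the Laplace transform is real-analytic on $\mathcal{C}$ and its derivatives at a point determine all the mixed moments of $\mu$, hence $\mu$ itself.

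\emph{Necessity.} After dividing by $\Phi(\mathbf{0})$ I may assume $\Phi(\mathbf{0})=1$, which matches the probability normalization. I would work with the set $K$ of completely monotone functions on $\mathcal{C}$ normalized by $\limsup_{\mathbf{x}\to\mathbf{0}}\Phi(\mathbf{x})\leq1$, viewed inside $[0,1]^{\mathcal{C}}$ with the topology of pointwise convergence. The first-order inequality $\partial_{\mathbf{v}}\Phi\leq0$ shows each such $\Phi$ is nonincreasing along cone directions (so $0\leq\Phi\leq1$), and convexity along rays yields uniform local Lipschitz bounds; consequently $K$ is convex, compact and metrizable. The main obstacle is to show that every extreme point of $K$ is a pure exponential $e_{\mathbf{y}}(\mathbf{x})=e^{-\langle\mathbf{x},\mathbf{y}\rangle}$ with $\mathbf{y}\in\mathcal{C}^{\ast}$, together with the degenerate limit that vanishes on $\mathcal{C}$ (the ``point at infinity'' of a one-point compactification of $\mathcal{C}^{\ast}$). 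The device is a shift decomposition: for fixed $\mathbf{a}\in\mathcal{C}$ both $\Phi(\cdot+\mathbf{a})$ and $\Phi(\cdot)-\Phi(\cdot+\mathbf{a})=-\Delta_{\mathbf{a}}\Phi$ are completely monotone, so
\[
\Phi=\Phi(\mathbf{a})\cdot\frac{\Phi(\cdot+\mathbf{a})}{\Phi(\mathbf{a})}+\bigl(1-\Phi(\mathbf{a})\bigr)\cdot\frac{\Phi-\Phi(\cdot+\mathbf{a})}{1-\Phi(\mathbf{a})}
\]
exhibits $\Phi$ as a convex combination of two elements of $K$ whenever $0<\Phi(\mathbf{a})<1$. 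Extremality then forces $\Phi(\mathbf{x}+\mathbf{a})=\Phi(\mathbf{a})\Phi(\mathbf{x})$, and since $\mathbf{a}\in\mathcal{C}$ is arbitrary an extreme $\Phi$ is multiplicative on $(\mathcal{C},+)$; a continuous multiplicative function bounded by $1$ is an exponential $e_{\mathbf{y}}$ with $\mathbf{y}\in\mathcal{C}^{\ast}$, the boundary cases $\Phi(\mathbf{a})\in\{0,1\}$ yielding only the degenerate limit.

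Finally, Choquet's theorem applied to the metrizable compact convex set $K$ represents our $\Phi$ as the barycenter of a probability measure carried by $\overline{\operatorname{ext}K}$. Transporting this measure through the homeomorphism $\mathbf{y}\mapsto e_{\mathbf{y}}$ yields a Borel probability measure on the one-point compactification of $\mathcal{C}^{\ast}$, and here the hypothesis that $\Phi$ is \emph{continuous} at the origin, so that $\lim_{\mathbf{x}\to\mathbf{0}}\Phi(\mathbf{x})=1$, is exactly what forces the mass at the point at infinity to vanish; hence the measure is supported on $\mathcal{C}^{\ast}$ and yields (\ref{eq_BHWC}), with uniqueness already established above. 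I expect the extreme-point identification together with this control of the behavior at infinity to be the crux, the compactness and the differentiation under the integral being routine by comparison.
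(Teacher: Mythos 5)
The paper does not actually prove this theorem: it refers the reader to Choquet \cite{Cho1969} for the proof and only remarks that the ``if'' part is routine (dependence of the integral on parameters), the ``only if'' part being the nontrivial one. Your proposal therefore supplies far more than the source does, and what you supply is essentially a correct reconstruction of Choquet's own argument: differentiation under the integral for sufficiency (your lower bound $\langle\mathbf{x},\mathbf{y}\rangle\geq c\left\Vert\mathbf{y}\right\Vert$ for $\mathbf{x}$ interior, obtained from compactness of the unit sphere of $\mathcal{C}^{\ast}$, is exactly the right domination), the shift decomposition $\Phi=\Phi(\cdot+\mathbf{a})+(-\Delta_{\mathbf{a}}\Phi)$ to force multiplicativity of extreme points, identification of the extreme points with the exponentials $e_{\mathbf{y}}$ plus the degenerate function vanishing on $\mathcal{C}$, and the use of continuity at the origin to annihilate the mass at the point at infinity. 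Two places where your sketch silently leans on nontrivial facts deserve mention. First, compactness of $K$ in $[0,1]^{\mathcal{C}}$ requires closedness under pointwise limits, which fails for the derivative-based definition of complete monotonicity as written; you need to pass to the equivalent finite-difference characterization $(-1)^{k}\Delta_{\mathbf{x}_{1}}\cdots\Delta_{\mathbf{x}_{k}}f\geq0$ (which the paper records as (\ref{eq_cm_differences})), and then invoke the classical fact that such finite-difference inequalities on an open cone already imply smoothness, so that the limit function is again completely monotone in the original sense. Second, your uniqueness argument ``moments determine $\mu$'' is not true for general measures; it works here because the tilted measure $e^{-\langle\mathbf{x}_{0},\mathbf{y}\rangle}\,\mathrm{d}\mu(\mathbf{y})$ has exponentially decaying tails (again via $\langle\mathbf{x}_{0},\mathbf{y}\rangle\geq c\left\Vert\mathbf{y}\right\Vert$), so its moment problem is determinate; alternatively, argue by analytic continuation and injectivity of the Fourier transform. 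With those two points made explicit, your proof is sound and matches the cited one in substance.
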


For the proof, see Choquet \cite{Cho1969}. Notice that the "only if" part is
nontrivial, the other part resulting from the dependence of the integral on
parameters. Thus, the existence of an integral representation like
(\ref{eq_BHWC}) serves as a certificate for the complete monotonicity of
$\Phi$. See \cite{SS2014} and \cite{KMS2019}.

An immediate consequence is as follows:

\begin{corollary}
\label{lem_str_superad}Let $\mathcal{C}$ be an open convex cone in a
finite-dimensional real vector space. Then every completely monotone function
$\Phi:\overline{\mathcal{C}}\rightarrow\mathbb{R}_{+}$ is $2$-monotone increasing.
\end{corollary}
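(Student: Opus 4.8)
The plan is to read the conclusion directly off the Bernstein--Hausdorff--Widder--Choquet representation of Theorem \ref{thm_BHWCh}. By definition (condition (\ref{eq_convex})), saying that $\Phi$ is $2$-monotone increasing means that the second-order differences
\[
\Delta_{\mathbf{x}}\Delta_{\mathbf{y}}\Phi(\mathbf{z})=\Phi(\mathbf{x}+\mathbf{y}+\mathbf{z})-\Phi(\mathbf{x}+\mathbf{z})-\Phi(\mathbf{y}+\mathbf{z})+\Phi(\mathbf{z})
\]
are nonnegative for all $\mathbf{x},\mathbf{y},\mathbf{z}\in\overline{\mathcal{C}}$. Since $\overline{\mathcal{C}}$ is itself a closed convex cone, every argument appearing above again belongs to $\overline{\mathcal{C}}$, so the expression is well defined and it suffices to establish this one inequality.

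First I would invoke Theorem \ref{thm_BHWCh} to write $\Phi(\mathbf{u})=\int_{\mathcal{C}^{\ast}}e^{-\langle\mathbf{u},\mathbf{w}\rangle}\,\mathrm{d}\mu(\mathbf{w})$ for a Borel probability measure $\mu$ carried by the dual cone $\mathcal{C}^{\ast}$. Substituting this into the four terms of the difference and collecting them under a single integral, I would pull out the common factor $e^{-\langle\mathbf{z},\mathbf{w}\rangle}$ and use $e^{-\langle\mathbf{x}+\mathbf{y},\mathbf{w}\rangle}=e^{-\langle\mathbf{x},\mathbf{w}\rangle}e^{-\langle\mathbf{y},\mathbf{w}\rangle}$. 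The key algebraic point is then the elementary factorization $ab-a-b+1=(a-1)(b-1)$ applied to $a=e^{-\langle\mathbf{x},\mathbf{w}\rangle}$ and $b=e^{-\langle\mathbf{y},\mathbf{w}\rangle}$, which yields
\[
\Delta_{\mathbf{x}}\Delta_{\mathbf{y}}\Phi(\mathbf{z})=\int_{\mathcal{C}^{\ast}}e^{-\langle\mathbf{z},\mathbf{w}\rangle}\bigl(e^{-\langle\mathbf{x},\mathbf{w}\rangle}-1\bigr)\bigl(e^{-\langle\mathbf{y},\mathbf{w}\rangle}-1\bigr)\,\mathrm{d}\mu(\mathbf{w}).
\]

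The final step is a pointwise sign check on the integrand. Because $\mathbf{w}\in\mathcal{C}^{\ast}$ is by definition a functional that is nonnegative on $\overline{\mathcal{C}}$, and $\mathbf{x},\mathbf{y},\mathbf{z}\in\overline{\mathcal{C}}$, all three pairings $\langle\mathbf{x},\mathbf{w}\rangle,\langle\mathbf{y},\mathbf{w}\rangle,\langle\mathbf{z},\mathbf{w}\rangle$ are nonnegative. Hence each of $e^{-\langle\mathbf{x},\mathbf{w}\rangle}-1$ and $e^{-\langle\mathbf{y},\mathbf{w}\rangle}-1$ is $\leq 0$, so their product is $\geq 0$, while the prefactor $e^{-\langle\mathbf{z},\mathbf{w}\rangle}$ is strictly positive; the integrand is therefore nonnegative $\mu$-a.e., and the integral is nonnegative, giving $\Delta_{\mathbf{x}}\Delta_{\mathbf{y}}\Phi(\mathbf{z})\geq 0$. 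I do not expect a genuine obstacle here: the whole argument turns on the sign of each pairing, which is exactly what placing the mass of $\mu$ on $\mathcal{C}^{\ast}$ (rather than on all of $V^{\ast}$) guarantees. For completeness I would note an alternative, purely differential route that bypasses the representation theorem: integrate the $k=2$ inequality $\partial_{\mathbf{x}}\partial_{\mathbf{y}}\Phi\geq0$ from Definition \ref{def_complete_mon} against the identity $\Delta_{\mathbf{x}}\Delta_{\mathbf{y}}\Phi(\mathbf{z})=\int_{0}^{1}\int_{0}^{1}\partial_{\mathbf{x}}\partial_{\mathbf{y}}\Phi(\mathbf{z}+s\mathbf{y}+t\mathbf{x})\,\mathrm{d}s\,\mathrm{d}t$ used in the computation preceding Theorem \ref{thm_str_sub_several var_2diff}. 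This works but forces one to track differentiability and membership in $\mathcal{C}$ along the segments $\mathbf{z}+s\mathbf{y}+t\mathbf{x}$, so the Laplace-transform argument is the cleaner and more direct one.
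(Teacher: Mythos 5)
Your proposal is correct and follows exactly the paper's own argument: apply the Bernstein--Hausdorff--Widder--Choquet representation, factor the second-order difference of the exponential kernel as $e^{-\langle\mathbf{u},\mathbf{z}\rangle}\bigl(e^{-\langle\mathbf{u},\mathbf{x}\rangle}-1\bigr)\bigl(e^{-\langle\mathbf{u},\mathbf{y}\rangle}-1\bigr)$, and conclude by positivity of the integrand since $\mu$ is supported on the dual cone. Your explicit sign check of the two factors is in fact slightly more detailed than the paper's, which simply asserts the final inequality.
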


\begin{proof}
According to the Bernstein-Hausdorff-Widder-Choquet theorem, the function
$\Phi$ admits the representation%
\[
\Phi(\mathbf{x})=\int_{\mathcal{C}^{\ast}}e^{-\langle\mathbf{x},\mathbf{u}%
\rangle}\mathrm{d}\mu(\mathbf{u}),\text{\quad}\mathbf{x}\in\overline
{\mathcal{C}},
\]
for a suitable Borel measure $\mu$ supported on the dual cone $\mathcal{C}%
^{\ast}.$ Therefore, for all $\mathbf{x},\mathbf{y},\mathbf{z}\in\mathcal{C},$%
\begin{multline*}
\Phi(\mathbf{x}+\mathbf{y}+\mathbf{z})+\Phi(\mathbf{z})-\Phi(\mathbf{x}%
+\mathbf{z})-\Phi(\mathbf{y}+\mathbf{z})\\
=\int_{\mathcal{C}^{\ast}}e^{-\langle\mathbf{u},\mathbf{x}+\mathbf{y}%
+\mathbf{z}\rangle}\mathrm{d}\mu(\mathbf{u})+\int_{\mathcal{C}^{\ast}%
}e^{-\langle\mathbf{u},\mathbf{z}\rangle}\mathrm{d}\mu(\mathbf{u})\\
-\int_{\mathcal{C}^{\ast}}e^{-\langle\mathbf{u},\mathbf{x}+\mathbf{z}\rangle
}\mathrm{d}\mu(\mathbf{u})-\int_{\mathcal{C}^{\ast}}e^{-\langle\mathbf{u}%
,\mathbf{y}+\mathbf{z}\rangle}\mathrm{d}\mu(\mathbf{u})\\
=\int_{\mathcal{C}^{\ast}}\left[  e^{-\langle\mathbf{u},\mathbf{x}%
+\mathbf{y}+\mathbf{z}\rangle}+e^{-\langle\mathbf{u},\mathbf{z}\rangle
}-e^{-\langle\mathbf{u},\mathbf{x}+\mathbf{z}\rangle}-e^{-\langle
\mathbf{u},\mathbf{y}+\mathbf{z}\rangle}\right]  \mathrm{d}\mu(\mathbf{u}%
)\geq0\\
=\int_{\mathcal{C}^{\ast}}e^{-\langle\mathbf{u},\mathbf{z}\rangle}\left(
e^{-\langle\mathbf{u},\mathbf{x}\rangle}-1\right)  \left(  e^{-\langle
\mathbf{u},\mathbf{y}\rangle}-1\right)  \mathrm{d}\mu(\mathbf{u})\geq0,
\end{multline*}
and the proof is done.
\end{proof}

According to \cite{SS2014}, Theorem 1.3, p. $327$, the function $\left(  \det
A\right)  ^{-\beta}$ (defined for $A\in\operatorname*{Sym}\nolimits^{++}%
(N,\mathbb{R}))$ is completely monotone if and only if $\beta\in\{0,\frac
{1}{2},1,\frac{3}{2},...\}\cup\lbrack\frac{N-1}{2},\infty).$ As a consequence,
for the same values of the exponents, the function $\Phi_{\beta}(A)=\left(
\det(I+A)\right)  ^{-\beta}-1$ is strongly superadditive.

The complete monotonicity of the function $\left(  \det A\right)  ^{-\beta}$
is established by Scott and Sokal \cite{SS2014}, pp. 355-356, providing
suitable integral representations of the type (\ref{eq_BHWC}). The same works
for the function $\Phi_{\beta}.$ For example, in the case $\beta=1/2,$%
\[
\frac{1}{\left(  \det(I+A)\right)  ^{1/2}}=\int_{\mathbb{R}^{N}}e^{-\langle
Ax,x\rangle}\prod\nolimits_{i=1}^{N}\frac{e^{-x_{i}^{2}}dx_{i}}{\sqrt{\pi}},
\]
where $A$ is a real symmetric positive-definite $N\times N$ matrix.

\section{An application to functional inequalities}

The aim of this section is to prove an inequality verified by the strongly
superadditive functions. The main ingredient is the classical theorem of
Tomi\'{c} and Weyl about the weak majorization. See \cite{NP2025}, Theorem 6.1.4.

\begin{theorem}
\emph{(}Theorem on weak majorization\emph{)}\label{thmTW} Let $f$ be a
nondecreasing convex function defined on a nonempty interval $I.$ If
$(a_{k})_{k=1}^{n}$ and $(b_{k})_{k=1}^{n}$ are two families of numbers in $I$
such that%
\[
\sum_{k=1}^{m}a_{k}\leq\sum_{k=1}^{m}b_{k}\quad\text{for }m=1,\dots,n,
\]
If $a_{1}\geq\cdots\geq a_{n},$ then
\[
\sum_{k=1}^{n}f(a_{k})\leq\sum_{k=1}^{n}f(b_{k}),
\]
while when $b_{1}\leq\cdots\leq b_{n},$ then the conclusion works in the
reverse direction.
\end{theorem}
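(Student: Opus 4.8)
The plan is to reduce the statement to the classical weak (sub)majorization inequality and then to prove that inequality by combining the support-line characterization of convexity with a summation-by-parts (Abel) argument. First I would observe that assuming the monotonicity of only one of the two families costs nothing. Writing $A_m=\sum_{k=1}^m a_k$ and $B_m=\sum_{k=1}^m b_k$, the hypothesis $A_m\le B_m$ together with $a_1\ge\cdots\ge a_n$ already says that the decreasing rearrangement of $(a_k)$ is weakly submajorized by that of $(b_k)$, because for any arrangement the ordered partial sums $\sum_{k=1}^m b_{[k]}$ dominate $\sum_{k=1}^m b_k$. Thus I would treat the case in which $(a_k)$ is nonincreasing and establish $\sum f(a_k)\le\sum f(b_k)$, leaving the companion (monotone $(b_k)$, reversed inequality) to the mirror argument at the end.

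For the main case I would use convexity through support lines. For each $k$ choose a subgradient $c_k\in\partial f(a_k)$; since the subdifferential of a convex function is a monotone map and $a_1\ge a_2\ge\cdots\ge a_n$, the $c_k$ may be selected with $c_1\ge c_2\ge\cdots\ge c_n$, and because $f$ is nondecreasing each slope satisfies $c_k\ge0$. The support-line inequality then gives
\[
f(b_k)\ge f(a_k)+c_k\,(b_k-a_k)\qquad(k=1,\dots,n),
\]
whence, after summation,
\[
\sum_{k=1}^n f(b_k)-\sum_{k=1}^n f(a_k)\ge\sum_{k=1}^n c_k\,(b_k-a_k).
\]

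The heart of the argument is a summation by parts applied to the right-hand side. Setting $S_m=B_m-A_m\ge0$ with $S_0=0$, Abel's rearrangement yields
\[
\sum_{k=1}^n c_k\,(b_k-a_k)=c_n S_n+\sum_{k=1}^{n-1}(c_k-c_{k+1})\,S_k,
\]
and here every term is nonnegative: $c_k-c_{k+1}\ge0$ by the monotone choice of slopes and $S_k\ge0$ by the partial-sum hypothesis, while the boundary term $c_nS_n$ is nonnegative precisely because $c_n\ge0$. This last point is the one place where the assumption that $f$ is nondecreasing (rather than merely convex) is indispensable, since it compensates for the absence of the terminal equality $A_n=B_n$ present in ordinary majorization. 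We conclude $\sum f(b_k)\ge\sum f(a_k)$.

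Finally, the companion assertion, in which the monotone family is $(b_k)$ and the inequality is reversed, I would obtain by symmetry rather than by a fresh computation: carrying out the analogous construction with support lines taken at the points $b_k$ and reversing the order of summation in the Abel step turns the monotonicity pattern of the selected slopes around, so that the chain of nonnegative terms becomes a chain of terms of the opposite sign and the direction of the final inequality flips accordingly. The main obstacle I anticipate is not any single estimate but the bookkeeping around the two boundary effects: guaranteeing a \emph{monotone} and \emph{nonnegative} selection of subgradients at possibly non-smooth points and at the endpoints of $I$, and controlling the sign of the residual term $c_nS_n$ in the summation by parts. That residual term is exactly what distinguishes the weak-majorization regime treated here from the classical equality-constrained Hardy--Littlewood--P\'olya setting, and keeping its sign correct in both the direct and the mirrored case is the delicate part of the proof.
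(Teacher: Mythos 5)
The paper does not actually prove this theorem: it is quoted as a classical result of Tomi\'c and Weyl with a pointer to \cite{NP2025}, Theorem 6.1.4, so there is no in-paper proof to compare against. Judged on its own merits, your argument for the main clause (the family $(a_k)$ nonincreasing) is correct and is the standard one: support lines $f(b_k)\ge f(a_k)+c_k(b_k-a_k)$ with a nonincreasing, nonnegative selection $c_k=f_+'(a_k)$, followed by Abel summation against the nonnegative partial-sum differences $S_k=B_k-A_k$. Your preliminary reduction to weak submajorization is harmless but unnecessary, since the Abel step never uses any ordering of $(b_k)$.

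The second clause is where there is a genuine gap. Your ``mirror'' argument does not close. Taking support lines at the $b_k$ with slopes $0\le d_1\le\cdots\le d_n$ (nonnegative because $f$ is nondecreasing, nondecreasing because $(b_k)$ is), Abel summation gives
\[
\sum_{k=1}^n d_k(b_k-a_k)=d_nS_n+\sum_{k=1}^{n-1}(d_k-d_{k+1})S_k,
\]
in which the sum is $\le 0$ but the boundary term $d_nS_n$ is $\ge 0$; summing in the reverse order merely replaces $S_k$ by $S_n-S_{k-1}$, whose sign the hypotheses do not control. So the expression has no definite sign, and this is not a bookkeeping issue: the reversed conclusion cannot hold for nondecreasing convex $f$ under the stated hypotheses, since already for $n=1$ one has $a_1<b_1$ and $f$ strictly increasing giving $f(a_1)<f(b_1)$ rather than $\ge$. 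What your mirrored computation does prove --- and what the companion statement must be understood to assert --- is the case of $f$ \emph{nonincreasing} convex: then $d_1\le\cdots\le d_n\le 0$ and every term of the Abel expansion, boundary term included, has the favourable sign, yielding $\sum f(a_k)\ge\sum f(b_k)$. (Equivalently, the reversed inequality for nonincreasing \emph{concave} $f$, which is the form actually invoked at the end of Theorem \ref{thm_Pop_gen}, follows by applying the first clause to $-f$.) You should either prove the companion under that corrected monotonicity hypothesis or flag the discrepancy explicitly; as written, the symmetry you appeal to is not there.
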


With the help of Theorem \ref{thmTW}, we can obtain the following new
functional inequality of Popoviciu type.

\begin{theorem}
\label{thm_Pop_gen}Suppose that $\mathcal{C}$ is a convex cone and
$\Phi:\mathcal{C\rightarrow}\mathbb{R}_{+}$ is a strongly superadditive
function such that%
\[
\mathbf{x}\leq\mathbf{y}\text{ in }\mathcal{C}\text{ implies }\Phi
(\mathbf{x})\leq\Phi(\mathbf{y}).
\]
Then for every $\mathbf{x},\mathbf{y},\mathbf{z}\in\mathcal{C}$ and every
nondecreasing and convex function $f$ defined on an interval including the
range of $\Phi,$ we have the inequality%
\begin{equation}
f\left(  \Phi(\mathbf{x}+\mathbf{y}+\mathbf{z})\right)  +f\left(
\Phi(\mathbf{z})\right)  \geq f\left(  \Phi(\mathbf{x}+\mathbf{z})\right)
+f\left(  \Phi(\mathbf{y}+\mathbf{z})\right)  , \label{eq_Pop_type}%
\end{equation}
which, by symmetrization, leads to the following Popoviciu type inequality
\emph{\cite{Pop1965}},
\begin{multline*}
\frac{f\left(  \Phi(\mathbf{x})\right)  +f\left(  \Phi(\mathbf{y})\right)
+f\left(  \Phi(\mathbf{z})\right)  }{3}+f\left(  \Phi(\mathbf{x}%
+\mathbf{y}+\mathbf{z})\right) \\
\geq\frac{2}{3}\left(  f\left(  \Phi(\mathbf{x}+\mathbf{y})\right)  +f\left(
\Phi(\mathbf{y}+\mathbf{z})\right)  +f\left(  \Phi(\mathbf{x}+\mathbf{z}%
)\right)  \right)  .
\end{multline*}

If $f$ is nonincreasing and concave, then the last inequality works in the
other direction.
\end{theorem}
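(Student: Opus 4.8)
The plan is to obtain the two-term inequality \eqref{eq_Pop_type} as a direct application of the weak majorization theorem (Theorem \ref{thmTW}) and then to derive the symmetric Popoviciu inequality by adding three permuted copies of \eqref{eq_Pop_type}.

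First I would isolate the two structural facts furnished by the hypotheses. The monotonicity of $\Phi$ gives
\[
\Phi(\mathbf{x}+\mathbf{z})\leq\Phi(\mathbf{x}+\mathbf{y}+\mathbf{z})\quad\text{and}\quad\Phi(\mathbf{y}+\mathbf{z})\leq\Phi(\mathbf{x}+\mathbf{y}+\mathbf{z}),
\]
since $\mathbf{x}+\mathbf{z}$ and $\mathbf{y}+\mathbf{z}$ are both dominated by $\mathbf{x}+\mathbf{y}+\mathbf{z}$ in the order of $\mathcal{C}$, while the strong superadditivity of $\Phi$ gives
\[
\Phi(\mathbf{x}+\mathbf{z})+\Phi(\mathbf{y}+\mathbf{z})\leq\Phi(\mathbf{x}+\mathbf{y}+\mathbf{z})+\Phi(\mathbf{z}).
\]
I then set $a_{1},a_{2}$ to be the two numbers $\Phi(\mathbf{x}+\mathbf{z}),\Phi(\mathbf{y}+\mathbf{z})$ arranged so that $a_{1}\geq a_{2}$, and put $b_{1}=\Phi(\mathbf{x}+\mathbf{y}+\mathbf{z})$, $b_{2}=\Phi(\mathbf{z})$. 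The first display yields $a_{1}\leq b_{1}$ (the case $m=1$), and the second display is exactly $a_{1}+a_{2}\leq b_{1}+b_{2}$ (the case $m=2$). As the family $(a_{k})$ is decreasing and $f$ is nondecreasing and convex, Theorem \ref{thmTW} delivers $f(a_{1})+f(a_{2})\leq f(b_{1})+f(b_{2})$, which is precisely \eqref{eq_Pop_type}.

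For the symmetrization I would apply \eqref{eq_Pop_type} three times, letting each of $\mathbf{x},\mathbf{y},\mathbf{z}$ play in turn the role of the distinguished base variable:
\begin{gather*}
f\!\left(\Phi(\mathbf{x}+\mathbf{y}+\mathbf{z})\right)+f\!\left(\Phi(\mathbf{z})\right)\geq f\!\left(\Phi(\mathbf{x}+\mathbf{z})\right)+f\!\left(\Phi(\mathbf{y}+\mathbf{z})\right),\\
f\!\left(\Phi(\mathbf{x}+\mathbf{y}+\mathbf{z})\right)+f\!\left(\Phi(\mathbf{x})\right)\geq f\!\left(\Phi(\mathbf{x}+\mathbf{y})\right)+f\!\left(\Phi(\mathbf{x}+\mathbf{z})\right),\\
f\!\left(\Phi(\mathbf{x}+\mathbf{y}+\mathbf{z})\right)+f\!\left(\Phi(\mathbf{y})\right)\geq f\!\left(\Phi(\mathbf{x}+\mathbf{y})\right)+f\!\left(\Phi(\mathbf{y}+\mathbf{z})\right).
\end{gather*}
Adding the three lines, the left-hand side contributes $3f(\Phi(\mathbf{x}+\mathbf{y}+\mathbf{z}))+f(\Phi(\mathbf{x}))+f(\Phi(\mathbf{y}))+f(\Phi(\mathbf{z}))$, while each of the three pairwise terms $f(\Phi(\mathbf{x}+\mathbf{y}))$, $f(\Phi(\mathbf{y}+\mathbf{z}))$, $f(\Phi(\mathbf{x}+\mathbf{z}))$ occurs exactly twice on the right; dividing by $3$ produces the claimed Popoviciu inequality. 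The final assertion for a nonincreasing concave $f$ follows immediately by applying the inequality just proved to $-f$, which is nondecreasing and convex.

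Since the computation is short there is no deep obstacle; the single point demanding care is the correct pairing of sequences for Theorem \ref{thmTW}. The mixed terms $\Phi(\mathbf{x}+\mathbf{z}),\Phi(\mathbf{y}+\mathbf{z})$ must form the weakly submajorized family $(a_{k})$ and the extreme terms $\Phi(\mathbf{x}+\mathbf{y}+\mathbf{z}),\Phi(\mathbf{z})$ the majorizing family $(b_{k})$. Here it is monotonicity that secures the $m=1$ inequality $a_{1}\leq b_{1}$---without it only the sum condition $m=2$ would be at hand and the hypotheses of Theorem \ref{thmTW} could not be met---and strong superadditivity that secures the $m=2$ inequality. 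Monotonicity also shows that $\Phi(\mathbf{x}+\mathbf{y}+\mathbf{z})$ is the largest and $\Phi(\mathbf{z})$ the smallest of all four values, so both families are already in decreasing order and no rearrangement is required.
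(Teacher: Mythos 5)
Your proof is correct and follows essentially the same route as the paper: you pair $\bigl(\Phi(\mathbf{x}+\mathbf{z}),\Phi(\mathbf{y}+\mathbf{z})\bigr)$ against $\bigl(\Phi(\mathbf{x}+\mathbf{y}+\mathbf{z}),\Phi(\mathbf{z})\bigr)$, derive the $m=1$ partial-sum condition from monotonicity and the $m=2$ condition from strong superadditivity, and invoke Theorem \ref{thmTW}, exactly as the author does. Your explicit three-fold symmetrization and the reduction of the concave case to $-f$ merely spell out steps the paper leaves implicit.
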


\begin{proof}
Without loss of generality we may assume $a_{1}=\Phi(\mathbf{y}+\mathbf{z}%
)\geq a_{2}=\Phi(\mathbf{x}+\mathbf{z}).$ Then
\[
a_{1}=\Phi(\mathbf{y}+\mathbf{z})\leq b_{1}=\Phi(\mathbf{x+y+z})
\]
and%
\[
a_{1}+a_{2}=\Phi(\mathbf{y}+\mathbf{z})+\Phi(\mathbf{x}+\mathbf{z})\leq
b_{1}+b_{2}=\Phi(\mathbf{x+y+z})+\Phi(\mathbf{z})
\]
and thus the conclusion follows from Theorem \ref{thmTW}.\ \ \ \ \ \ \ \ \ \ \ \ \ \ \ \ \ \ \ \ \ \ \ \ \ \ \ \ \ \ \ \ \ \ \ \ \ \ \ \ \ \ \ \ \ \ \ \ \ \ \ \ \ \ \ \ \ \ \ \ \ \ \ \ \ \ \ \ \ \ \ \ \ \ \ \ \ \ \ \ \ \ \ \
\end{proof}

\begin{corollary}
\ \ \ \ \ \ For all $A,B,C\in\operatorname*{Sym}\nolimits^{+}(N,\mathbb{R})$
and $p\geq1,\,$\ the following inequality holds:\ \
\begin{multline*}
\left(  \det A\right)  ^{p}\ +(\det C)^{p}\ +\det\left(  A+B+C\right)  ^{p}\\
\geq\frac{2}{3}\left[  \left(  \det(A+B)\right)  ^{p}+\left(  \det
(B+C)\right)  ^{p}+\left(  \det(A+C)\right)  ^{p}\right]  .
\end{multline*}

\end{corollary}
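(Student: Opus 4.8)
The plan is to obtain this inequality as the direct specialization of the symmetrized Popoviciu-type inequality furnished by Theorem~\ref{thm_Pop_gen}, applied with $\Phi$ the determinant restricted to the cone $\operatorname*{Sym}\nolimits^{+}(N,\mathbb{R})$ and with $f$ the power function $f(t)=t^{p}$. Concretely, I would take $\mathbf{x}=A$, $\mathbf{y}=B$, $\mathbf{z}=C$ and read off the conclusion, so the whole task collapses to checking that the two structural hypotheses of that theorem are satisfied for this choice of $\Phi$ and $f$.

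First I would verify the requirements imposed on $\Phi=\det$. Nonnegativity, $\det\colon\operatorname*{Sym}\nolimits^{+}(N,\mathbb{R})\to\mathbb{R}_{+}$, is immediate since a positive semidefinite matrix has a nonnegative product of eigenvalues. Strong superadditivity of $\det$ on this cone has already been established above (the result attributed to F.~Zhang), so it may be invoked verbatim. The only point genuinely needing an argument is order-preservation, namely that $A\leq B$ in the L\"owner order forces $\det A\leq\det B$; for this I would recycle the eigenvalue argument already used in the proof of that strong superadditivity. By Weyl's monotonicity principle, $0\leq A\leq B$ yields $\lambda_{k}(A)\leq\lambda_{k}(B)$ for every $k$, and since all eigenvalues are nonnegative the products obey $\det A=\prod_{k}\lambda_{k}(A)\leq\prod_{k}\lambda_{k}(B)=\det B$.

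Next I would dispatch the hypotheses on the outer function. For $p\geq1$ the map $f(t)=t^{p}$ is nondecreasing and convex on $[0,\infty)$, as witnessed by $f'(t)=pt^{p-1}\geq0$ and $f''(t)=p(p-1)t^{p-2}\geq0$; moreover $[0,\infty)$ contains the range of $\det$ on $\operatorname*{Sym}\nolimits^{+}(N,\mathbb{R})$, so $f$ is admissible. With both sets of hypotheses secured, Theorem~\ref{thm_Pop_gen} applies and its symmetrized Popoviciu inequality gives the asserted inequality after the substitution $\mathbf{x}=A$, $\mathbf{y}=B$, $\mathbf{z}=C$.

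I do not expect a genuine obstacle here: all the analytic weight rests in the strong superadditivity of $\det$ and in Theorem~\ref{thm_Pop_gen}, both of which are already available. The single item demanding any care is the L\"owner-monotonicity of $\det$, and even that reduces, via Weyl's principle, to the monotonicity of a product of nonnegative eigenvalues, essentially recorded already in the earlier determinant computation.
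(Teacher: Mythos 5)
Your proposal is correct and is exactly the route the paper intends: the corollary is presented as an immediate specialization of Theorem~\ref{thm_Pop_gen} to $\Phi=\det$ on $\operatorname*{Sym}\nolimits^{+}(N,\mathbb{R})$ and $f(t)=t^{p}$, with the required monotonicity of $\det$ in the L\"owner order supplied by Weyl's monotonicity principle just as you argue. Note only that a literal application of the theorem produces $\tfrac{1}{3}\left((\det A)^{p}+(\det B)^{p}+(\det C)^{p}\right)$ as the first group of terms on the left-hand side, so the corollary as printed (with $(\det A)^{p}+(\det C)^{p}$) appears to contain a typographical slip rather than reflecting any gap in your argument.
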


\begin{remark}
The proof of Theorem \emph{\ref{thm_Pop_gen} }also shows that its
conclusion\emph{ }remains true if $\Phi$ is comonotonic strongly superadditive
and $\mathbf{x,y}$ and $\mathbf{z}$ are pairwise
comonotonic.$\ \ \ \ \ \ \ \ \ \ \ $\ \ \ \ \ \ \ \ \ \ \ \
\end{remark}

See \cite{N2021}, Theorem $10$, for a genuine generalization of Popoviciu's
inequality to the framework of several variables.

\section{Appendix. Preliminaries on ordered Banach spaces}

We next present a quick review of the necessary background on ordered Banach
spaces. For more, see the paper \cite{GN2024a} and the textbook \cite{AT2007}.

Recall that an ordered vector space is any pair $(E,\allowbreak C)$ consisting
of a real vector space $E$ and a convex cone $C\subset E$ such that
$C\cap\left(  -C\right)  =\left\{  0\right\}  $. It is customary to denote an
ordered vector space by its underlying vector space.

In any ordered vector space we may define a partial ordering (i.e., a
reflexive, antisymmetric, and transitive relation) $\leq$ on $E$ by defining
$x\leq y$ if and only%
\[
y-x\in C;
\]
$x\leq y$ will be also denoted $y\geq x$ and $x<y$ (equivalently $y>x$) will
mean that $y\geq x$ and $x\neq y.$

Notice that $x\in C$ if and only if $x\geq0$; for this reason $C$ is also
denoted $E_{+}$ and called the \emph{positive cone} of $E.$

The concept of ordered Banach space is usually defined as any ordered vector
space $E$ endowed with a complete norm (sometimes adding the condition that
the positive cone is closed in the norm topology). See \cite{AT2007}, p. 85.
However, for the purposes of the present paper we will need a more restrictive
concept of ordered Banach space. More precisely, we will consider \emph{only}
those ordered Banach spaces $E$ which also verify the following two properties:

\begin{enumerate}
\item[(OBS1)] the cone $E_{+}$ is \emph{generating}, that is, $E=E_{+}-E_{+};$

\item[(OBS2)] the norm is\emph{ monotone on the positive cone, }that is,%
\[
0\leq x\leq y\text{ implies }\left\Vert x\right\Vert \leq\left\Vert
y\right\Vert .
\]

\end{enumerate}

An ordered vector space $E$ such that every pair of elements $x,y$ admits a
supremum $\sup\{x,y\}$ and an infimum is called a \emph{vector lattice}. In
this case for each $x\in E$ we can define $x^{+}=\sup\left\{  x,0\right\}  $
(the positive part of $x$), $x^{-}=\sup\left\{  -x,0\right\}  $ (the negative
part of $x$) and $\left\vert x\right\vert =\sup\left\{  -x,x\right\}  $ (the
modulus of $x$). $\ $We have $x=x^{+}-x^{-}$ and $\left\vert x\right\vert
=x^{+}+x^{-}.$ A Banach lattice is any real Banach space $E$ which is at the
same time a vector lattice and verifies the compatibility condition%
\[
\left\vert x\right\vert \leq\left\vert y\right\vert \text{ implies }\left\Vert
x\right\Vert \leq\left\Vert y\right\Vert .
\]

Most (but not all) ordered Banach spaces are actually Banach lattices. So are
the Euclidean space $\mathbb{R}^{N}$ and the discrete spaces $c_{0},$ $c$ and
$\ell^{p}$ for $1\leq p\leq\infty$ (endowed with the coordinate-wise
ordering). The same is true for the function spaces
\begin{align*}
C(K)  &  =\left\{  f:K\rightarrow\mathbb{R}:f\text{ continuous on the compact
Hausdorff space }K\right\} \\
C_{b}(X)  &  =\left\{  f:X\rightarrow\mathbb{R}:\text{ }f\text{ continuous and
bounded on the metric space }X\right\}  ,\\
\mathcal{U}C_{b}(X)  &  =\left\{  f:X\rightarrow\mathbb{R}:\text{ }f\text{
uniformly continuous and bounded}\right.  \text{ on the}\\
&  \left.  \text{metric space }X\right\}  ,\\
C_{0}(X)  &  =\left\{  f:X\rightarrow\mathbb{R}:\text{ }f\text{ continuous and
null to infinity on the locally }\right. \\
&  \left.  \text{compact Hausdorff space }X\right\}  ,
\end{align*}
each one endowed with the sup-norm $\left\Vert f\right\Vert _{\infty}=\sup
_{x}\left\vert f(x)\right\vert $ and the pointwise ordering. Other Banach
lattices of an utmost interest are the Lebesgue spaces $L^{p}(\mu)$
($p\in\lbrack1,\infty]),$ endowed with norm%
\[
\left\Vert f\right\Vert _{p}=\left\{
\begin{array}
[c]{cl}%
\left(  \int_{X}\left\vert f(x)\right\vert ^{p}\mathrm{d}\mu(x)\right)  ^{1/p}
& \text{if }p\in\lbrack1,\infty)\\
\operatorname*{esssup}\limits_{x\in X}\left\vert f(x)\right\vert  & \text{if
}p=\infty
\end{array}
\right.
\]
and the pointwise ordering modulo null sets.

Some important examples of ordered Banach spaces (that are not necessarily
Banach lattices) are provided by the Schatten classes. The \emph{real Schatten
class} $\operatorname{Re}S_{p}(\mathbb{R}^{N})$ is just the space
$\operatorname*{Sym}(N,\mathbb{R)},$ of all $N\times N$-dimensional symmetric
matrices with real coefficients, when endowed with the \emph{Löwner ordering},%
\[
A\leq B\text{ if and only if }\langle A\mathbf{x},\mathbf{x}\rangle\leq\langle
B\mathbf{x},\mathbf{x}\rangle\text{ for all }\mathbf{x}\in\mathbb{R}^{N},
\]
and the \emph{Schatten trace norm} $\left\Vert \cdot\right\Vert _{S_{p}},$
which is defined by the formula
\[
\left\Vert A\right\Vert _{S_{p}}=\left\{
\begin{array}
[c]{cl}%
\left(  \sum_{k=1}^{N}\left\vert \lambda_{k}(A)\right\vert ^{p}\right)  ^{1/p}
& \text{if }p\in\lbrack1,\infty)\\
\sup_{1\leq k\leq N}\left\vert \lambda_{k}(A)\right\vert  & \text{if }%
p=\infty.
\end{array}
\right.
\]
Here $\lambda_{1}(A),...,\lambda_{N}(A)$ is the list of the eigenvalues of
$A$, repeated according to their multiplicities. It is worth noticing that the
Schatten norm of index $\infty$ equals the \emph{operator norm},
\[
\left\Vert A\right\Vert =\sup_{\left\Vert \mathbf{x}\right\Vert \leq
1}\left\vert \langle A\mathbf{x},\mathbf{x}\rangle\right\vert ,
\]
while the Schatten norm of index $2$ is nothing but the \emph{Frobenius
norm},
\[
\left\Vert A\right\Vert _{F}=\left(  \sum_{i=1}^{N}\sum_{j=1}^{N}a_{ij}%
^{2}\right)  ^{1/2},
\]
provided that $A=\left(  a_{ij}\right)  _{i,j=1}^{N}.$ Notice that the
Frobenius norm is associated to the inner product%
\[
\langle A,B\rangle=\operatorname*{trace}(AB).
\]

It is usual to denote $\operatorname{Re}S_{2}(\mathbb{R}^{N})$ simply by
$\operatorname*{Sym}(N,\mathbb{R}).$

The positive cone of each of the spaces $\operatorname{Re}S_{p}(\mathbb{R}%
^{N})$ is
\[
\operatorname*{Sym}\nolimits^{+}(N,\mathbb{R})=\left\{  A\in
\operatorname*{Sym}(N,\mathbb{R}):\langle A\mathbf{x},\mathbf{x}\rangle
\geq0\text{ for all }\mathbf{x}\in\mathbb{R}^{N}\right\}  ,
\]
while its interior equals%
\[
\operatorname*{Sym}\nolimits^{++}(N,\mathbb{R})=\left\{  A\in
\operatorname*{Sym}(N,\mathbb{R}):\langle A\mathbf{x},\mathbf{x}%
\rangle>0\text{ for all }\mathbf{x}\in\mathbb{R}^{N},\mathbf{x}\neq0\text{
}\right\}  .
\]

As is well known, $A\geq0$ (respectively $A>0$) if and only if its spectrum
$\sigma(A)$ is included in $[0,\infty)$ (respectively in $(0,\infty)$).

Sometimes it is convenient to list the eigenvalues of a symmetric matrix
$C\in\operatorname*{Sym}(N,\mathbb{R})$ in decreasing order and repeated
according to their multiplicities as follows:%
\[
\lambda_{1}^{\downarrow}(C)\geq\cdots\geq\lambda_{N}^{\downarrow}(C).
\]
So is the case of \emph{Weyl's monotonicity principle }$($see \cite{NP2025},
Corollary 6.4.3, p. 253$)$, which asserts that%
\[
A\leq B\text{ in }\operatorname*{Sym}(N,\mathbb{R})\text{ implies }\lambda
_{i}^{\downarrow}(A)\leq\lambda_{i}^{\downarrow}(B)\text{ for }i=1,...,N.
\]
An immediate consequence of it is the inequality%
\[
0\leq A\leq B\text{ in }\operatorname{Re}S_{p}(\mathbb{R}^{N})\text{ implies
}\left\Vert A\right\Vert _{p}\leq\left\Vert B\right\Vert _{p},
\]
which shows that the real Schatten classes are indeed ordered Banach spaces.

The theory of Schatten classes associated to infinite dimensional separable
Hilbert spaces (real or complex) can be found in the book of Simon
\cite{Simon2005}.

As was noticed by Amann \cite{Amann1974}, Proposition 3.2, p. 184, the Gâteaux
differentiability offers a convenient way to recognize the monotone increasing
functions acting on ordered Banach spaces: the positivity of their differential:

\begin{lemma}
\label{lemAmann_conv}Suppose that $E$ and $F$ are two ordered Banach spaces,
$C$ is a closed convex subset of $E$ with nonempty interior
$\operatorname{int}C$ and $\Phi:C\rightarrow F$ is a function continuous on
$C$ and Gâteaux differentiable on $\operatorname{int}C.$ Then $\Phi$ is
increasing on $C$ if and only if $\Phi^{\prime}(\mathbf{a})\geq0$ for all
$\mathbf{a}\in\operatorname{int}C.$
\end{lemma}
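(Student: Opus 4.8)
The plan is to prove the two implications by quite different means: the necessity of $\Phi'(\mathbf{a})\geq 0$ is a one-line differentiation argument, whereas the sufficiency requires reducing the vector-valued statement to the scalar case. Throughout I would use that the positive cones $E_+$ and $F_+$ are closed and convex, as holds for all the ordered Banach spaces in this Appendix. For the forward implication, fix $\mathbf{a}\in\operatorname{int}C$ and an arbitrary $\mathbf{h}\in E_+$. Since $\operatorname{int}C$ is open, $\mathbf{a}+t\mathbf{h}\in C$ for all sufficiently small $t>0$, and because $t\mathbf{h}\geq 0$ we have $\mathbf{a}+t\mathbf{h}\geq\mathbf{a}$, so monotonicity gives $\Phi(\mathbf{a}+t\mathbf{h})\geq\Phi(\mathbf{a})$. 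Hence the difference quotients $\bigl(\Phi(\mathbf{a}+t\mathbf{h})-\Phi(\mathbf{a})\bigr)/t$ all lie in $F_+$; letting $t\to 0+$ and using the G\^ateaux differentiability together with the closedness of $F_+$ yields $\Phi'(\mathbf{a})\mathbf{h}\geq 0$. As $\mathbf{h}\in E_+$ was arbitrary, $\Phi'(\mathbf{a})\geq 0$.

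For the converse the device I would use is scalarization through the dual cone $F_+^{\ast}=\{\varphi\in F^{\ast}:\varphi(w)\geq 0\text{ for all }w\in F_+\}$. Since $F_+$ is a closed convex cone, the bipolar theorem gives $w\in F_+$ if and only if $\varphi(w)\geq 0$ for every $\varphi\in F_+^{\ast}$; so, given $x\leq y$ in $C$, it suffices to establish $\varphi\bigl(\Phi(y)-\Phi(x)\bigr)\geq 0$ for each such $\varphi$. The hard part is that the segment joining $x$ and $y$ may meet the boundary of $C$, where $\Phi$ is only assumed continuous and need not be differentiable, so I cannot directly differentiate along it. To circumvent this I would fix once and for all a point $\mathbf{c}\in\operatorname{int}C$ and, for $\varepsilon\in(0,1)$, set $x_\varepsilon=(1-\varepsilon)x+\varepsilon\mathbf{c}$ and $y_\varepsilon=(1-\varepsilon)y+\varepsilon\mathbf{c}$. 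By convexity of $C$ and the standard fact that a convex combination of an interior point with a point of the (closed) set remains in $\operatorname{int}C$, both endpoints and the entire segment between them lie in $\operatorname{int}C$; moreover $y_\varepsilon-x_\varepsilon=(1-\varepsilon)(y-x)\in E_+$.

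With the path safely inside $\operatorname{int}C$, I would fix $\varphi\in F_+^{\ast}$ and consider the real function $h(t)=\varphi\bigl(\Phi((1-t)x_\varepsilon+ty_\varepsilon)\bigr)$ on $[0,1]$. The chain rule for G\^ateaux derivatives along the affine path gives $h'(t)=\varphi\bigl(\Phi'((1-t)x_\varepsilon+ty_\varepsilon)(y_\varepsilon-x_\varepsilon)\bigr)$; here $\Phi'(\cdot)(y_\varepsilon-x_\varepsilon)\in F_+$ by the hypothesis $\Phi'\geq 0$ and $y_\varepsilon-x_\varepsilon\in E_+$, while $\varphi\in F_+^{\ast}$, so $h'(t)\geq 0$ throughout. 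Thus $h$ is nondecreasing, $h(1)\geq h(0)$, i.e. $\varphi\bigl(\Phi(y_\varepsilon)-\Phi(x_\varepsilon)\bigr)\geq 0$. Since $\varphi$ was an arbitrary element of $F_+^{\ast}$, the bipolar characterization yields $\Phi(y_\varepsilon)\geq\Phi(x_\varepsilon)$ for every $\varepsilon\in(0,1)$.

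Finally I would let $\varepsilon\to 0+$: then $x_\varepsilon\to x$ and $y_\varepsilon\to y$ in norm, so by the continuity of $\Phi$ on $C$ we get $\Phi(x_\varepsilon)\to\Phi(x)$ and $\Phi(y_\varepsilon)\to\Phi(y)$, and the closedness of $F_+$ lets the inequality pass to the limit, giving $\Phi(x)\leq\Phi(y)$. This completes the argument. The two places where I expect to need care are precisely the boundary issue just handled by the interior approximation, and the legitimacy of the chain-rule computation for $h'$, which is what makes the reduction to scalar monotone functions rigorous.
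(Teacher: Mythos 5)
Your proof is correct, and its overall skeleton coincides with the paper's: the necessity is read off from the definition of the G\^{a}teaux derivative together with the closedness of $F_{+}$, and the sufficiency is split into an interior step plus a boundary approximation along segments $(1-\varepsilon)\mathbf{x}+\varepsilon\mathbf{c}$ with $\mathbf{c}\in\operatorname{int}C$ (the paper's $\mathbf{u}_{t}=\mathbf{x}_{0}+t(\mathbf{x}-\mathbf{x}_{0})$ with $t\rightarrow1^{-}$ is the same device, reparametrized). Where you genuinely diverge is in the interior step: the paper simply invokes a ``gradient inequality'' from an earlier lemma (in effect a vector-valued mean value estimate, as in Amann's Proposition 3.2), whereas you scalarize through the dual cone $F_{+}^{\ast}$, use the bipolar theorem to reduce the order inequality in $F$ to a family of real inequalities, and then apply the one-variable mean value theorem to $h(t)=\varphi\bigl(\Phi((1-t)\mathbf{x}_{\varepsilon}+t\mathbf{y}_{\varepsilon})\bigr)$. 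This makes the argument self-contained, avoids any vector-valued integration or mean value inequality, and needs only the pointwise existence of $h'\geq0$ (no continuity of the derivative along the segment); the price is the explicit hypothesis that $F_{+}$ is closed and convex so that the bipolar characterization applies, but the paper's own passage to the limit in $\Phi(\mathbf{u}_{t})\leq\Phi(\mathbf{v}_{t})$ already requires $F_{+}$ to be closed, so nothing extra is really being assumed. Both routes are sound; yours has the advantage of documenting precisely where each structural hypothesis on the cones is used.
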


\begin{proof}
The "only if" part follows immediately from the definition of the Gâteaux
derivative. For the other implication, notice that the gradient inequality
mentioned by Lemma $2$ shows that $\Phi$ is isotone on $\operatorname{int}C$
if $\Phi^{\prime}(\mathbf{a})\geq0$ for all $\mathbf{a}\in\operatorname{int}%
C$. As concerns the isotonicity on $C,$ that follows by an approximation
argument. Suppose that $\mathbf{x},\mathbf{y}\in C$ and $\mathbf{x}%
\leq\mathbf{y}$. For $\mathbf{x}_{0}\in\operatorname*{int}C$ arbitrarily fixed
and $t\in\lbrack0,1),$ both elements $\mathbf{u}_{t}=\mathbf{x}_{0}%
+t(\mathbf{x}-\mathbf{x}_{0})$ and $\mathbf{v}_{t}=\mathbf{x}_{0}%
+t(\mathbf{y}-\mathbf{x}_{0})$ belong to $\operatorname*{int}C$ and
$\mathbf{u}_{t}\leq\mathbf{v}_{t}.$ Moreover, $\mathbf{u}_{t}\rightarrow
\mathbf{x}$ and $\mathbf{v}_{t}\rightarrow\mathbf{y}$ as $t\rightarrow1.$
Passing to the limit in the inequality $\Phi(\mathbf{u}_{t})\leq
\Phi(\mathbf{v}_{t})$ we conclude that $\Phi(\mathbf{x})\leq\Phi(\mathbf{y}).$
\end{proof}

\begin{remark}
\label{rem3}If the ordered Banach space $E$ has finite dimension, then the
statement of Lemma \emph{\ref{lemAmann_conv}} remains valid by replacing the
interior of $C$ by the relative interior of $C$. See \emph{\cite{NP2025}},
Exercise $6$, p. $81$.
\end{remark}

\bigskip

\noindent\textbf{Acknowledgements}. The author would like to thank \c{S}tefan
Cobza\c{s} and Dan-\c{S}tefan Marinescu for many useful comments on the
subject of this paper.

\bigskip

\noindent\textbf{Declarations}

\smallskip

\noindent\textbf{Funding} The author declares that no funds, grants, or other
support were received during the preparation of this manuscript.

\noindent\textbf{Data Availability Statement} Data sharing is not applicable
to this article as no datasets were generated or analyzed during the current study.

\noindent\textbf{Conflict of interest} The author has no conflict of interests
to report.

\bigskip

\end{document}